\newcommand{\R}{\ensuremath{\mathbb{R}}}
\renewcommand{\S}{\ensuremath{\mathbb{S}}}
\newcommand{\Haus}{\ensuremath{\mathcal{H}}}
\newcommand{\1}{\ensuremath{\mathds{1}}}
\newcommand{\eps}{\ensuremath{\varepsilon}}
\DeclareMathOperator{\dist}{\textnormal{dist}}
\DeclareMathOperator{\Tr}{Tr}
\DeclareMathOperator{\id}{\mathds{1}}
\DeclareMathOperator{\supp}{supp}
\newtheorem{theorem}{Theorem}[section]
\newtheorem{lemma}[theorem]{Lemma}
\newtheorem{proposition}[theorem]{Proposition}
\newtheorem{corollary}[theorem]{Corollary}
\numberwithin{theorem}{section}
\theoremstyle{remark}
\newtheorem{remark}[theorem]{Remark}
\newcommand{\limplus}{{\mathchoice{\vcenter{\hbox{$\scriptstyle +$}}}
  {\vcenter{\hbox{$\scriptstyle +$}}}
  {\vcenter{\hbox{$\scriptscriptstyle +$}}}
  {\vcenter{\hbox{$\scriptscriptstyle +$}}}
}}
\begin{document}

\title[The normal derivative of the Lane--Emden ground state]{An inequality for the normal derivative\\ of the Lane--Emden ground state}

\author[R. L. Frank]{Rupert L. Frank}
\address{\textnormal{(R. L. Frank)} Mathematisches Institut, Ludwig-Maximilians Universit\"at M\"unchen, Theresienstr. 39, 80333 M\"unchen, Germany, Munich Center for Quantum Science and Technology, Schellingstr. 4, 80799 M\"unchen, Germany, and Department of Mathematics, California Institute of Technology, Pasadena, CA 91125, USA}
\email{r.frank@lmu.de}

\author[S. Larson]{Simon Larson}
\address{\textnormal{(S. Larson)} Mathematical Sciences, Chalmers University of Technology and the University of Gothenburg, SE-41296 Gothenburg, Sweden}
\email{larsons@chalmers.se}

\subjclass[2020]{}
\keywords{}

\begin{abstract}
	We consider Lane--Emden ground states with polytropic index $0\leq q-1\leq 1$, that is, minimizers of the Dirichlet integral among $L^q$-normalized functions. Our main result is a sharp lower bound on the $L^2$-norm of the normal derivative in terms of the energy, which implies a corresponding isoperimetric inequality. Our bound holds for arbitrary bounded open Lipschitz sets $\Omega\subset\R^d$, without assuming convexity.
\end{abstract}

\thanks{\copyright\, 2022 by the authors. This paper may be
reproduced, in its entirety, for non-commercial purposes.\\
Partial support through U.S. National Science Foundation grant DMS-1954995 (R.L.F.), through the Deutsche For\-schungs\-gemeinschaft (DFG, German Research Foundation) through Germany’s Excellence Strategy EXC-2111-390814868 (R.L.F.), and through Knut and Alice Wallenberg Foundation grant KAW~2021.0193 (S.L.) is acknowledged. The authors would like to thank the anonymous referee for helpful suggestions.}

\maketitle

\section{Introduction and main results}

We are interested in sharp lower bounds on the normal derivative of minimizers of the variational problem
\begin{equation}\label{def: semi-linear eigenvalue}
	\lambda_q(\Omega) := \inf_{u \in C^\infty_0(\Omega)\setminus\{0\}} \frac{\|\nabla u\|_{L^2(\Omega)}^2}{\|u\|_{L^q(\Omega)}^2}\, .
\end{equation}
Here $1\leq q\leq 2$ is a parameter and $\Omega \subset \R^d$ is a non-empty open set. 

The most important cases are $q=2$, where $\lambda_2(\Omega)$ is the the bottom of the spectrum of the Dirichlet Laplacian in $\Omega$, and $q=1$, where $\lambda_1(\Omega)$ is the inverse torsional rigidity of $\Omega$. For general $1\leq q\leq 2$, the minimization problem $\lambda_q(\Omega)$ arises in connection with ground state solutions of the Lane--Emden equation with polytropic index $q-1$; see \eqref{eq: nonlinear equation} below.

Some of the results discussed in this paper, most importantly a Brunn--Minkowski type inequality for $\lambda_q$ (Theorem \ref{thm: BM lambdaq}), are valid for arbitrary open sets $\Omega$. Others require some modest assumptions, namely that the open set $\Omega$ is bounded and has Lipschitz boundary. Under these assumptions it is well known (see Section~\ref{sec: Prelminaries} for references) that there is a non-negative minimizer of~\eqref{def: semi-linear eigenvalue}. (Strictly speaking, it is a minimizer of the corresponding problem with $C^\infty_0(\Omega)$ replaced by $H^1_0(\Omega)$.) By homogeneity, we can choose a minimizer to be normalized in $L^q(\Omega)$. If either $1\leq q<2$ or if $q=2$ and $\Omega$ is connected, then the non-negativity and the normalization determine the minimizer uniquely. If $q=2$ and $\Omega$ has multiple connected components, then there may be several such minimizers. In this case, all our statements are valid for any choice of minimizer. In what follows, we denote a non-negative and normalized minimizer by $u_{q, \Omega}$.

Our main result is an isoperimetric-type inequality for the $L^2$-norm of the normal derivative of the minimizer $u_{q, \Omega}$ at the boundary. In what sense the normal derivative should be understood when the boundary is irregular is discussed in Section~\ref{sec: normal derivative}. Specifically, the result is the following theorem.

\begin{theorem}\label{thm: semi-linear eigenvalue}
	Fix $1\leq q \leq 2$, let $\Omega \subset \R^d$ be open and bounded with Lipschitz boundary, and define $\alpha_q :=(2+d(2/q-1))^{-1}$. Then
	\begin{equation}
		\label{eq:mainineq}
		\int_{\partial\Omega} \Bigl(\frac{\partial u_{q, \Omega}}{\partial \nu}\Bigr)^2\, d\Haus^{d-1}(x) \geq 
		\frac{\lambda_q(\Omega)^{1+\alpha_q}}{\alpha_q\, \lambda_q(B)^{\alpha_q}}\, , 
	\end{equation}
	where $u_{q, \Omega}$ is an $L^q$-normalized minimizer associated to $\lambda_q(\Omega)$, $\frac{\partial u}{\partial \nu}$ is the derivative of $u$ in the direction of the outward normal to $\partial \Omega$,  and $B$ is the unit ball. Moreover, equality holds if $\Omega$ is a ball.
\end{theorem}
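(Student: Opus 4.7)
The plan is to derive \eqref{eq:mainineq} by combining the Brunn--Minkowski type inequality for $\lambda_q$ (Theorem \ref{thm: BM lambdaq}) with a one-sided Hadamard-type shape derivative of $\lambda_q$ along the outer parallel sets $\Omega_t := \Omega + tB$, $t \geq 0$. From the definition of $\alpha_q$ one has the scaling identity $\lambda_q(rA)^{-\alpha_q} = r\,\lambda_q(A)^{-\alpha_q}$ for every $r > 0$; applying Theorem \ref{thm: BM lambdaq} to $(1-s)\Omega + sB = (1+t)^{-1}\Omega_t$ with $s = t/(1+t)$ and using this homogeneity produces the one-parameter inequality
\[
\lambda_q(\Omega_t)^{-\alpha_q} \;\geq\; \lambda_q(\Omega)^{-\alpha_q} + t\, \lambda_q(B)^{-\alpha_q} \qquad \text{for all } t\geq 0.
\]
A mean value theorem applied to $x \mapsto x^{-\alpha_q}$ converts this into the difference-quotient lower bound
\[
\frac{\lambda_q(\Omega) - \lambda_q(\Omega_t)}{t} \;\geq\; \frac{\xi_t^{\alpha_q + 1}}{\alpha_q}\,\lambda_q(B)^{-\alpha_q},
\]
where $\xi_t$ lies between $\lambda_q(\Omega_t)$ and $\lambda_q(\Omega)$.

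The second ingredient I would establish is the matching upper bound
\[
\limsup_{t\to 0^+}\, \frac{\lambda_q(\Omega) - \lambda_q(\Omega_t)}{t} \;\leq\; \int_{\partial\Omega}\Bigl(\frac{\partial u_{q,\Omega}}{\partial\nu}\Bigr)^2 d\Haus^{d-1}(x),
\]
a one-sided Hadamard shape derivative. In the smooth case this would be the classical Hadamard identity, but only this inequality is actually needed and it should be more robust under low regularity. My approach is to use the variational characterization of $\lambda_q(\Omega_t)$: construct a family of competitors on $\Omega_t$ by extending $u_{q,\Omega}$ into the collar $\Omega_t\setminus\Omega$ in a way that matches, to first order in the signed distance to $\partial\Omega$, the linear Taylor expansion dictated by $\partial u_{q,\Omega}/\partial\nu$ (for instance using tubular coordinates near $\partial\Omega$ and an appropriate cutoff), and then expand the Dirichlet integral and $L^q$ norm to first order in $t$. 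Continuity of $t \mapsto \lambda_q(\Omega_t)$, needed to conclude $\xi_t \to \lambda_q(\Omega)$, follows from the same construction together with the monotonicity $\lambda_q(\Omega)\geq \lambda_q(\Omega_t)$ under inclusion.

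Combining the two bounds and letting $t \to 0^+$ then yields \eqref{eq:mainineq}. When $\Omega = B$ one has $\Omega_t = (1+t)B$, so both sides of the Brunn--Minkowski consequence above equal $(1+t)\lambda_q(B)^{-\alpha_q}$ and every inequality in the argument is saturated, giving the stated equality case. The main obstacle I anticipate is the one-sided Hadamard inequality in the Lipschitz setting: the outward normal and the normal derivative $\partial u_{q,\Omega}/\partial\nu$ only exist $\Haus^{d-1}$-almost everywhere on $\partial\Omega$ in the framework of Section \ref{sec: normal derivative}, so constructing a competitor whose Dirichlet energy and $L^q$ norm admit a first-order expansion in $t$ with remainder genuinely of order $o(t)$ requires careful use of the tubular neighborhood structure of $\partial\Omega$ and of the integrability properties of $\partial u_{q,\Omega}/\partial\nu$.
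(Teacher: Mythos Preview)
Your reduction from Brunn--Minkowski to the difference-quotient lower bound is the same as the paper's and is fine. The genuine gap is in the second ingredient: the one-sided Hadamard inequality
\[
\limsup_{t\to 0^+}\frac{\lambda_q(\Omega)-\lambda_q(\Omega_t)}{t}\;\le\;\int_{\partial\Omega}\Bigl(\frac{\partial u_{q,\Omega}}{\partial\nu}\Bigr)^2 d\Haus^{d-1}
\]
cannot be obtained by the method you describe. Constructing a competitor $v_t$ on $\Omega_t$ (by extending $u_{q,\Omega}$ into the collar) feeds into the variational characterization of $\lambda_q(\Omega_t)$ and produces an \emph{upper} bound on $\lambda_q(\Omega_t)$, hence a \emph{lower} bound on $\lambda_q(\Omega)-\lambda_q(\Omega_t)$. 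If you carry your construction through to first order (take $v_t=u_{q,\Omega}+tw$ on $\Omega$ with $w=|\partial_\nu u_{q,\Omega}|$ on $\partial\Omega$ and extend linearly to $0$ on $\partial\Omega_t$), the Euler--Lagrange equation makes the bulk cross term cancel the denominator correction and you obtain
\[
\lambda_q(\Omega_t)\le \lambda_q(\Omega)-t\int_{\partial\Omega}\Bigl(\frac{\partial u_{q,\Omega}}{\partial\nu}\Bigr)^2 d\Haus^{d-1}+o(t),
\]
i.e.\ $\liminf_{t\to 0^+}\bigl(\lambda_q(\Omega)-\lambda_q(\Omega_t)\bigr)/t\ge \int_{\partial\Omega}(\partial_\nu u_{q,\Omega})^2\,d\Haus^{d-1}$, the opposite of what you need. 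Both this and the Brunn--Minkowski bound are lower bounds on the same quantity, so they do not combine.

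The inequality you actually need is a \emph{lower} bound on $\lambda_q(\Omega_t)$, and for that one must produce a test function for $\lambda_q(\Omega)$ out of the (unknown) minimizer $u_{q,\Omega_t}$. The paper does this by realizing $\Omega_t$ as (approximately) the image of $\Omega$ under a family of bi-Lipschitz maps $\Phi(t,\cdot)$, pulling $u_{q,\Omega_t}$ back to $\Omega$, and expanding; this gives both inequalities simultaneously and hence the exact derivative (Theorems~\ref{thm: variation} and~\ref{thm: Hadamard Minkowski sum}). The approximation $\Phi(t,\Omega)\approx \Omega+tB$ genuinely requires $C^1$ boundary (it fails already for polygons), so the Lipschitz case is then recovered by approximating $\Omega$ from inside by smooth domains and invoking the Jerison--Kenig/Verchota theory from Section~\ref{sec: normal derivative} to pass to the limit in the boundary integral. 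Your plan to attack the Lipschitz case directly via an explicit collar extension therefore needs to be replaced by one of these two mechanisms.
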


In~\eqref{eq:mainineq}, $d\mathcal H^{d-1}$ denotes integration with respect to $(d-1)$-dimensional Hausdorff measure, which is simply the surface measure on $\partial\Omega$.

In the linear case, i.e.\ $q=2$, the inequality simplifies to 
\begin{equation*}
	\int_{\partial\Omega} \Bigl(\frac{\partial u_{2, \Omega}}{\partial n}\Bigr)^2\, d\Haus^{d-1}(x) \geq 2\, \frac{\lambda_2(\Omega)^{3/2}}{\lambda_2(B)^{1/2}}\, .
\end{equation*}
For $q=1$ the inequality can equivalently be written as
\begin{equation*}
	\int_{\partial\Omega} \Bigl(\frac{\partial v_\Omega}{\partial n}\Bigr)^2\, d\Haus^{d-1}(x)\geq (d+2)\, T(B)^{1/(d+2)}T(\Omega)^{1-1/(d+2)}\, , 
\end{equation*}
where $v_\Omega$ denotes the torsion function of $\Omega$, that is, the unique solution of
\begin{equation*}
	\begin{cases}
		-\Delta v_\Omega =1 \quad & \mbox{in }\Omega\, , \\
		v_\Omega =0 & \mbox{on }\partial\Omega\, , 
	\end{cases}
\end{equation*}
and $T(\Omega):= \int_\Omega |\nabla v_\Omega|^2\, dx -2 \int_\Omega v_\Omega\, dx = - \int_\Omega |\nabla v_\Omega|^2\, dx$ denotes the torsional rigidity.

If $\Omega \subset \R^d$ is an open set of finite measure and $\Omega^*$ denotes an open ball with the same measure, then a classical rearrangement argument (see, for instance,~\cite{LiebLoss}) implies the Faber--Krahn-type inequality $\lambda_q(\Omega)\geq \lambda_q(\Omega^*)$. When combined with Theorem~\ref{thm: semi-linear eigenvalue}, one obtains the following isoperimetric inequality.

\begin{corollary}\label{cor}
	Fix $1\leq q \leq 2$ and let $\Omega \subset \R^d$ be open and bounded with Lipschitz boundary. Then
\begin{equation*}
	\int_{\partial\Omega} \Bigl(\frac{\partial u_{q, \Omega}}{\partial \nu}\Bigr)^2\, d\Haus^{d-1}(x)
	\geq \int_{\partial\Omega^*} \Bigl( \frac{\partial u_{q, \Omega^*}}{\partial \nu}\Bigr)^2\, d\Haus^{d-1}(x)\, , 
\end{equation*}	
with equality if $\Omega$ is a ball.
\end{corollary}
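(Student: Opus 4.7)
The plan is to combine Theorem~\ref{thm: semi-linear eigenvalue} with the Faber--Krahn-type inequality $\lambda_q(\Omega)\geq \lambda_q(\Omega^*)$ mentioned just before the corollary. Both tools are already in hand, so the argument will be a short chain of inequalities together with the equality case for balls.

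First, I would apply Theorem~\ref{thm: semi-linear eigenvalue} to the set $\Omega$ to obtain the lower bound
\begin{equation*}
	\int_{\partial\Omega} \Bigl(\frac{\partial u_{q, \Omega}}{\partial \nu}\Bigr)^2\, d\Haus^{d-1}(x) \geq
	\frac{\lambda_q(\Omega)^{1+\alpha_q}}{\alpha_q\, \lambda_q(B)^{\alpha_q}}\, .
\end{equation*}
Since $\alpha_q>0$, the right-hand side is a strictly increasing function of $\lambda_q(\Omega)$, so the Faber--Krahn inequality $\lambda_q(\Omega)\geq \lambda_q(\Omega^*)$ yields
\begin{equation*}
	\frac{\lambda_q(\Omega)^{1+\alpha_q}}{\alpha_q\, \lambda_q(B)^{\alpha_q}} \geq 	\frac{\lambda_q(\Omega^*)^{1+\alpha_q}}{\alpha_q\, \lambda_q(B)^{\alpha_q}}\, .
\end{equation*}

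Next, I would invoke the equality statement in Theorem~\ref{thm: semi-linear eigenvalue} applied to the ball $\Omega^*$, which gives
\begin{equation*}
	\int_{\partial\Omega^*} \Bigl(\frac{\partial u_{q, \Omega^*}}{\partial \nu}\Bigr)^2\, d\Haus^{d-1}(x) = \frac{\lambda_q(\Omega^*)^{1+\alpha_q}}{\alpha_q\, \lambda_q(B)^{\alpha_q}}\, .
\end{equation*}
Chaining the three displays produces precisely the asserted isoperimetric inequality, and the equality case when $\Omega$ itself is a ball is immediate because then $\Omega=\Omega^*$ (after a translation, which does not affect either integral).

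There is really no substantial obstacle here; the only point that requires a moment of care is the monotonicity in $\lambda_q$ of the bound produced by Theorem~\ref{thm: semi-linear eigenvalue}, which holds trivially since $1+\alpha_q>0$. The result is essentially a corollary in the strict sense: once Theorem~\ref{thm: semi-linear eigenvalue} is established and the Faber--Krahn inequality (a standard symmetric-decreasing rearrangement argument as recalled in the text) is recorded, the conclusion is a one-line combination.
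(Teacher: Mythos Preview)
Your proposal is correct and matches the paper's own reasoning exactly: the corollary is obtained by combining Theorem~\ref{thm: semi-linear eigenvalue} for $\Omega$, the Faber--Krahn inequality $\lambda_q(\Omega)\geq\lambda_q(\Omega^*)$, and the equality case of Theorem~\ref{thm: semi-linear eigenvalue} for the ball $\Omega^*$. There is nothing to add.
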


As far as we know, Theorem~\ref{thm: semi-linear eigenvalue} and Corollary~\ref{cor} are new. In the special where $\Omega$ is convex and $q=2$, the inequality in Theorem~\ref{thm: semi-linear eigenvalue}, while not explicitly stated, can be deduced relatively easily from results proved in Jerison's work~\cite{Jerison_Adv96}. (Indeed, the corresponding inequality is written out in his analogous work on the capacity problem for convex sets; see~\cite[Corollary 3.19]{Jerison_Acta96}.) Similarly, still assuming that $\Omega$ is convex, the inequality in Theorem~\ref{thm: semi-linear eigenvalue} for $q=1$ could be deduced from the work of Colesanti and Fimiani in~\cite{ColesantiFimiani}. The inequality, for $\Omega$ convex and for $q\in\{1, 2\}$, appears explicitly in~\cite[Subsection 3.2]{BucurFragalaLamboley_ESAIM2012}; see also~\cite{Colesanti_etal_pCap_adv2015} for related results in the convex setting. Many of the above references are primarily concerned with generalizations of the Minkow\-ski problem to set functionals on convex sets. For the set functional $\lambda_q$ with $q=2$ this is treated in \cite{Jerison_Adv96} and for $q=1$ and $1<q<2$ we refer to \cite{ColesantiFimiani} and \cite{QiXi}, respectively. Thus, what we accomplish here is to extend the inequality to the full range $1\leq q\leq 2$ and, more importantly, to remove the convexity assumption.

The above works in the convex case use representation formulas for $\lambda_q(\Omega)$ in terms of an integral of the support function against certain measures on $\S^{d-1}$. These formulas appear prominently in the assertions and proofs in the convex case. They do not have an analogue in the non-convex case. However, as we show here, while these formulas are a convenient tool in the convex case, they are not essential for the proof of the inequality in Theorem~\ref{thm: semi-linear eigenvalue} and all relevant assertions can be proved without them. This leads to a number of significant new difficulties that we need to overcome.

Our proof of Theorem~\ref{thm: semi-linear eigenvalue} has two main ingredients, namely a Brunn--Minkowski inequality for $\lambda_q$ and the computation of the derivative of the function $t\mapsto \lambda_q(\Omega + tB)$ at $t=0$. In the remainder of this introduction, we discuss these two ingredients in some more detail and explain how they yield our theorem.

Before doing this, however, in order to motivate our arguments, let us recall how the classical isoperimetric inequality follows from the Brunn--Minkowski inequality. The latter inequality states that, for any non-empty compact sets $\Omega_0, \Omega_1 \subset \R^d$ and $0\leq t\leq 1$, 
\begin{equation}\label{eq: BM}
	|(1-t) \Omega_0+t \Omega_1|^{1/d} \geq (1-t)|\Omega_0|^{1/d}+t|\Omega_1|^{1/d}\, .
\end{equation}
Here and in what follows, $|\cdot|$ denotes the Lebesgue measure and, for $\Omega, \Omega'\subset  \R^d, s\geq 0$,  $s \Omega$ denote the dilation of $\Omega$ by $s$, and $+$ denotes the Minkowski sum, that is, 
\begin{align*}
	s \Omega := \{s x  : x \in \Omega\}\quad \mbox{and} \quad 
	\Omega + \Omega' := \{x+y: x\in \Omega, y\in \Omega'\}\, .
\end{align*} 
By setting $\Omega_1=B$, the unit ball in $\R^d$, and differentiating~\eqref{eq: BM} with respect to $t$ at $t=0$, for sufficiently regular sets one obtains the classical isoperimetric inequality. In fact, for arbitrary sets one arrives at an isoperimetric inequality not for the perimeter but for the so-called lower outer Minkowski content defined by
\begin{equation*}
	\mathcal{SM}_*(\Omega) := \liminf_{t \to 0^\limplus} \frac{|(\Omega+t B)\setminus \Omega|}{t }\, .
\end{equation*}
If $\Omega$ is sufficiently regular, for instance, if $\partial\Omega$ is Lipschitz, then the lower outer Minkowski content agrees with the perimeter of $\Omega$ and one arrives at the classical isoperimetric inequality~\cite{Hadwiger}. However, under what geometric assumptions $\mathcal{SM}_*$ and perimeter agree is not a trivial question.

Here we are interested in inequalities that arise by mimicking this argument for $\lambda_q$ instead of Lebesgue measure. As we shall see, these set functions also satisfy Brunn--Minkowski-type inequalities. The main issues that we need to deal with when carrying out this procedure are similar to those alluded to above; namely, when can the differentiation be justified, and when does the derivative agree with the quantity we aim to bound. 

\subsection{Outline of proof}

We begin by discussing the first ingredient of our proof, namely a Brunn--Minkowski inequality for $\lambda_q$. This inequality goes back to Brascamp--Lieb~\cite{BrascampLieb_JFA76} in the case $q=2$, Borell~\cite{Borell_85} for $q=1$, and Colesanti~\cite{colesanti_brunnminkowski_2005} for $1\leq q<2$. However, in these works, the inequality is stated under unnecessarily restrictive assumptions on $\Omega$. In Section~\ref{sec: BM inequality}, we use a simple approximation argument to show that the Brunn--Minkowski inequality remains valid under weaker assumptions on the geometry. Moreover, in the appendix we show that Colesanti's characterization of the cases of equality for $1\leq q<2$ is valid without any further assumptions.

Specifically, we deduce the following result.

\begin{theorem}\label{thm: BM lambdaq}
	Fix $1\leq q \leq 2$, let $\Omega_0, \Omega_1\subset \R^d$ be non-empty open sets, and set $\alpha_q :=(2+d(2/q-1))^{-1}$. Then, for all $0< t < 1$, 
	\begin{equation}\label{eq: BM ineq thm}
		\lambda_{q}((1-t)\Omega_0+t\Omega_1) \leq \Bigl((1-t)\lambda_{q}(\Omega_0)^{-\alpha_q}+t\lambda_{q}(\Omega_1)^{-\alpha_q}\Bigr)^{-1/\alpha_q} \, , 
	\end{equation}
	here the right-hand side should be understood as zero if $\min\{\lambda_q(\Omega_0), \lambda_q(\Omega_1)\}=0$.

	If $1\leq q<2$, $\min\{\lambda_q(\Omega_0), \lambda_q(\Omega_1)\}>0$, and equality holds in~\eqref{eq: BM ineq thm} for some $t\in (0, 1)$, then there is an open bounded convex set $K\subset \R^d$ such that $\Omega_0$ and $\Omega_1$ agree with homothetic images of $K$ up to sets of capacity zero. 
\end{theorem}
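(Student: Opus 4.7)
My plan is to deduce~\eqref{eq: BM ineq thm} for arbitrary non-empty open sets from the classical Brunn--Minkowski inequality for $\lambda_q$ due to \cite{BrascampLieb_JFA76, Borell_85, colesanti_brunnminkowski_2005}, which is proved under stronger geometric assumptions on the sets involved, via monotone approximation from the inside. The two basic properties of $\lambda_q$ that I will rely on are: first, monotonicity under set inclusion, $\Omega\subset\Omega'\Rightarrow\lambda_q(\Omega)\geq\lambda_q(\Omega')$, which is immediate from the variational characterization~\eqref{def: semi-linear eigenvalue}; and second, continuity along increasing exhaustions, $\lambda_q(\Omega^{(n)})\to\lambda_q(\Omega)$ whenever open sets $\Omega^{(n)}\subset\Omega$ satisfy $\bigcup_n\Omega^{(n)}=\Omega$. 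The latter follows by noting that any $u\in C^\infty_0(\Omega)$ has compact support and therefore lies in $C^\infty_0(\Omega^{(n)})$ for all sufficiently large~$n$, combined with the fact that $C^\infty_0(\Omega)$-functions form a minimizing sequence for $\lambda_q(\Omega)$.

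Given these two properties, I would fix exhaustions $\Omega_i^{(n)}\nearrow\Omega_i$ ($i=0,1$) by bounded open sets belonging to the class for which the classical Brunn--Minkowski inequality applies, invoke the classical inequality on the pair $(\Omega_0^{(n)},\Omega_1^{(n)})$, and use the inclusion $(1-t)\Omega_0^{(n)}+t\Omega_1^{(n)}\subset(1-t)\Omega_0+t\Omega_1$ together with monotonicity to bound the left-hand side from below by $\lambda_q((1-t)\Omega_0+t\Omega_1)$. Sending $n\to\infty$ and applying the continuity property to each $\Omega_i^{(n)}$ then yields~\eqref{eq: BM ineq thm}, assuming both $\lambda_q(\Omega_i)$ are positive. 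The degenerate case $\min\{\lambda_q(\Omega_0),\lambda_q(\Omega_1)\}=0$ is handled by observing that $(1-t)\Omega_0+t\Omega_1$ contains a translate of $(1-t)\Omega_0$, combined with the scaling identity $\lambda_q(s\Omega)=s^{-1/\alpha_q}\lambda_q(\Omega)$. The main technical point here is confirming that the classical inequality genuinely applies to the approximating pair; the Minkowski sum of two bounded sets with smooth boundary is bounded and open but need not have smooth boundary, so one must match the approximation carefully to the form of the classical statement invoked, and this is what Section~\ref{sec: BM inequality} is devoted to.

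For the equality case with $1\leq q<2$ and $\lambda_q(\Omega_i)>0$, the goal is to combine Colesanti's rigidity, which characterizes equality in the Brunn--Minkowski inequality for convex sets, with capacity-theoretic tools. The natural first step is to pass to the convex hulls $K_i=\mathrm{conv}(\Omega_i)$ and to try to deduce from the equality in~\eqref{eq: BM ineq thm} applied to $(\Omega_0,\Omega_1)$ that $\lambda_q(\Omega_i)=\lambda_q(K_i)$. Combined with a strict monotonicity property of $\lambda_q$ (namely, that $\Omega\subsetneq K$ differing by a set of positive capacity gives $\lambda_q(\Omega)>\lambda_q(K)$), this would yield $\Omega_i=K_i$ up to capacity zero, after which Colesanti's convex rigidity identifies $K_0,K_1$ as homothets of a common convex body. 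The main obstacle is that the naive chain of inequalities relating $\lambda_q$ on $\Omega_i$ and $K_i$ (using monotonicity and the already-established convex BM on $K_i$) does not close tightly enough to force $\lambda_q(\Omega_i)=\lambda_q(K_i)$; one must instead exploit the uniqueness and quasi-continuity properties of the minimizer $u_{q,\Omega_i}$ and a refined capacity analysis, which is where I expect the appendix to concentrate its technical effort.
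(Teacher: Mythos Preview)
Your approach to the inequality~\eqref{eq: BM ineq thm} is correct and matches the paper's argument in Section~\ref{sec: BM inequality} almost exactly: smooth interior exhaustion, classical BM on the approximants, monotonicity under inclusion, and continuity along exhaustions (Lemma~\ref{lem: Hausdorff interior continuity}). One small misconception: your worry about the Minkowski sum of smooth sets failing to be smooth is not an issue, because the classical statements (Borell, Colesanti) take as input a \emph{pair} $(\Omega_0,\Omega_1)$ of $C^2$ bounded sets and bound $\lambda_q((1-t)\Omega_0+t\Omega_1)$ directly, with no regularity hypothesis on the sum itself; nothing needs to be ``matched carefully''. The paper does treat $q=2$ separately (removing the connectedness and finite-measure hypotheses in Brascamp--Lieb, and upgrading additive concavity $\lambda_2((1-s)\Omega_0+s\Omega_1)\leq(1-s)\lambda_2(\Omega_0)+s\lambda_2(\Omega_1)$ to the $\alpha_q$-form via a rescaling), but this is bookkeeping rather than a new idea.

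For the equality case your convex-hull strategy is not the route the paper takes, and as you yourself note, the naive inequality chain via $K_i=\mathrm{conv}(\Omega_i)$ does not close. The paper's argument (Appendix~\ref{Appendix}) runs through an entirely different mechanism. First, equality forces boundedness of $\Omega_0,\Omega_1$ (otherwise $\Omega_t$ contains infinitely many disjoint balls and $\lambda_q(\Omega_t)=0$). Then the key step is a \emph{pointwise} inequality for the minimizers (Lemma~\ref{lem: pointwise ineq}),
\[
\lambda_q(\Omega_t)^{-1/2}u_{q,\Omega_t}((1-t)x+ty)^{\frac{2-q}{2}} \geq (1-t)\lambda_q(\Omega_0)^{-1/2}u_{q,\Omega_0}(x)^{\frac{2-q}{2}}+t\,\lambda_q(\Omega_1)^{-1/2}u_{q,\Omega_1}(y)^{\frac{2-q}{2}},
\]
extended from Colesanti's smooth version by the same approximation machinery as the inequality. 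Raising to a suitable power places $f=\lambda_q(\Omega_0)^{q/(q-2)}u_{q,\Omega_0}^q$, $g$, $h$ in the Pr\'ekopa--Leindler framework, and the equality characterization of Pr\'ekopa--Leindler due to Dubuc then yields a log-concave $F$ with $f=F$ and $g(x)=\kappa F(\eta x+x')$ almost everywhere. The convex set $K$ in the conclusion is $\{F>0\}$, not $\mathrm{conv}(\Omega_0)$; one shows $\Omega_0\subset K$ with $|K\setminus\Omega_0|=0$, applies the already-established BM to the homothetic convex pair to force $\lambda_q(\Omega_0)=\lambda_q(K)$, and only then does uniqueness and strict positivity of the minimizer (Lemma~\ref{ellipticreg}) upgrade ``measure zero'' to ``capacity zero''. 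So the pointwise eigenfunction inequality and Pr\'ekopa--Leindler rigidity are the engine, not a direct convex-hull comparison.
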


\begin{remark}
	Without additional geometric assumptions the characterization of equality does not extend to the case $q=2$; this follows by considering sets with multiple connected components, for instance $\Omega_0 = (0, 1)^d \cup \bigl( (2, 3)\times (0, 1)^{d-1} \bigr)$ and $\Omega_1=(0, 1)^d$.
\end{remark}

Our second ingredient concerns the derivative of $t\mapsto \lambda_q(\Omega + tB)$ at $t=0$. This is closely related to a Hadamard variation formula. In general, if $\Lambda$ is a real-valued set function, then one may ask how $\Lambda$ behaves with respect to perturbations around a given set $\Omega$. The most common manner in which to analyze such questions is to compute the Fr\'echet derivative of the map $\Phi \mapsto \Lambda(\Phi(\Omega))$ at $\Phi=\1$, where $\Phi$ is a diffeomorphism in a neighborhood of $\Omega$. Formulas of this type are typically called Hadamard formulas.

However, the geometric perturbations $\Omega +tB$ that appear in the Brunn--Minkowski formula in Theorem~\ref{thm: BM lambdaq} can not in general be parametrized by a family of local diffeomorphisms. As such, our desired result lies somewhat outside the standard theory. Our approach will be to first prove a Hadamard formula for $\lambda_q$ (see Section~\ref{sec: Hadamard}) and then show that, for $\Omega$ with $C^1$ boundary, the curve $\Omega+ tB$ can be approximated well enough by diffeomorphisms of $\Omega$ to compute the desired derivative (see Section~\ref{sec: Geometry}). Precisely, we prove the following theorem.

\begin{theorem}\label{thm: Hadamard Minkowski sum}
  Fix $1\leq q\leq 2$ and let $\Omega \subset \R^d$ be open, bounded, and connected with $C^1$ boundary. Then
  \begin{equation*}
   \lim_{t \to 0^\limplus} \frac{\lambda_q(\Omega + tB)-\lambda_q(\Omega)}{t} = -\int_{\partial \Omega} \Bigl( \frac{\partial u_{q, \Omega}}{\partial \nu}\Bigr)^2\, d\Haus^{d-1}(x)\, , 
 \end{equation*}
 where $u_{q, \Omega}$ is an $L^q$-normalized minimizer associated to $\lambda_q(\Omega)$, $\frac{\partial u}{\partial \nu}$ is the derivative of $u$ in the direction of the outward normal to $\partial \Omega$,  and $B$ is the unit ball.
\end{theorem}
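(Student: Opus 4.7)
The plan is to combine a Hadamard-type variation formula for $\lambda_q$ under smooth flows with a geometric comparison between the Minkowski perturbation $\Omega+tB$ and the image of $\Omega$ under a carefully chosen diffeomorphism family. The derivative sought is not directly a Hadamard derivative because $\Omega+tB$ is generally not of the form $\Phi_t(\Omega)$ for a smooth family $\Phi_t$; the bulk of the work is to reduce it to one.

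First I would establish the Hadamard formula of Section~\ref{sec: Hadamard}: for a Lipschitz vector field $V$ on $\R^d$ and $\Phi_t=\mathrm{id}+tV$ (a bi-Lipschitz map for $|t|$ small),
\begin{equation*}
\frac{d}{dt}\lambda_q(\Phi_t(\Omega))\Big|_{t=0}=-\int_{\partial\Omega}\Bigl(\frac{\partial u_{q, \Omega}}{\partial\nu}\Bigr)^2 (V\cdot\nu)\, d\Haus^{d-1}(x).
\end{equation*}
To derive this, one pulls the Rayleigh quotient back to $\Omega$: the numerator and denominator become integrals against the metric $A_t=(D\Phi_t)^{-1}(D\Phi_t)^{-T}$ and the Jacobian $J_t=\det D\Phi_t$, with $\partial_t A_t|_{t=0}=-(DV+DV^T)$ and $\partial_t J_t|_{t=0}=\operatorname{div} V$. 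Criticality of $u_{q, \Omega}$ at $t=0$ kills the contribution coming from the variation of the minimizer, and integration by parts using the Lane--Emden equation $-\Delta u_{q, \Omega}=\lambda_q(\Omega)\,u_{q, \Omega}^{q-1}$ together with $u_{q, \Omega}=0$ on $\partial\Omega$ cancels the interior terms, leaving only the displayed boundary trace.

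The second step is a sandwich argument via monotonicity of $\lambda_q$. For the upper bound on the limit, given $\eps>0$ I would construct a Lipschitz $V_\eps$ on $\R^d$ with $|V_\eps|\leq 1$ pointwise and $V_\eps\cdot\nu\geq 1-\eps$ on $\partial\Omega$; this is available because $C^1$ regularity makes $\nu$ continuous on $\partial\Omega$, so $\nu$ can be extended to $\R^d$, mollified, and truncated. Since $|V_\eps|\leq 1$, one has $\Phi_t^\eps(\Omega)\subseteq \Omega+tB$; monotonicity gives $\lambda_q(\Omega+tB)\leq \lambda_q(\Phi_t^\eps(\Omega))$, and the Hadamard formula yields
\begin{equation*}
\limsup_{t\to 0^\limplus}\frac{\lambda_q(\Omega+tB)-\lambda_q(\Omega)}{t}\leq -(1-\eps)\int_{\partial\Omega}\Bigl(\frac{\partial u_{q, \Omega}}{\partial\nu}\Bigr)^2\,d\Haus^{d-1}(x);
\end{equation*}
sending $\eps\to 0$ closes this direction. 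For the lower bound I would build a Lipschitz $\tilde V_\eps$ close to $\nu$ on $\partial\Omega$ but with $\tilde V_\eps\cdot\nu\geq 1+\eps$, show that $\tilde\Phi_t^\eps(\Omega)\supseteq \Omega+tB$ for all sufficiently small $t$, and argue analogously to obtain the matching $\liminf$ bound.

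The main obstacle I anticipate is this last geometric containment $\tilde\Phi_t^\eps(\Omega)\supseteq\Omega+tB$ under the sole assumption that $\partial\Omega$ is $C^1$. With only $C^1$ regularity, $\nu$ is merely continuous and the signed distance function need not be $C^1$ outside $\overline\Omega$, so one cannot rely on a global normal-coordinate chart or a uniform reach in the sense of Federer. Instead, one must exploit compactness of $\partial\Omega$ to secure a uniform unique-nearest-point property on a small tubular neighborhood, and then show that the perturbed boundary $\tilde\Phi_t^\eps(\partial\Omega)$ lies at normal distance $t(\tilde V_\eps\cdot\nu)+o(t)$ from $\partial\Omega$ uniformly in the base point; this suffices to push $\tilde\Phi_t^\eps(\partial\Omega)$ strictly beyond the level set $\{\operatorname{dist}(\cdot,\partial\Omega)=t\}$ and conclude the containment. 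This geometric work, to which Section~\ref{sec: Geometry} is devoted, is where the $C^1$ hypothesis is genuinely used and where the real difficulty of the theorem lies.
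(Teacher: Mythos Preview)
Your proposal follows essentially the same strategy as the paper: establish the Hadamard formula for diffeomorphic perturbations (the paper's Theorem~\ref{thm: variation}), then sandwich $\Omega+tB$ between two diffeomorphic images of $\Omega$ via vector fields approximating $\nu$ on $\partial\Omega$ (the paper's Proposition~\ref{prop: set inclusion}). The paper implements this with a single explicit field $X$, a mollification of $\psi(\delta_\Omega/\eta_0)\nabla\delta_\Omega$, obtaining the inner inclusion $\Phi(t,\Omega)\subseteq\Omega+tB$ from $|X|\le 1$ and the outer inclusion $\Phi((1+\delta)t,\Omega)\supseteq\Omega+tB$ by rescaling time; your two-field formulation is an equivalent repackaging.

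One caution: for the outer containment you propose to ``exploit compactness of $\partial\Omega$ to secure a uniform unique-nearest-point property on a small tubular neighborhood.'' That property (positive reach) is in general \emph{not} available for merely $C^1$ boundaries---it is essentially a $C^{1,1}$ condition---so it cannot be invoked as stated. The paper sidesteps nearest-point projection entirely: at each boundary point it works in local $C^1$ graph coordinates, uses Heine--Cantor on the normal field to trap $\partial\Omega\cap B_{\eps_0}$ inside a cone $\{|x_d|\le\kappa|x|\}$ with $\kappa$ arbitrarily small, and then shows by a direct computation with the signed distance (estimate~\eqref{eq: distance integral estimate}) that the image of each boundary point under $\Phi((1+\delta)t,\cdot)$ lands at Euclidean distance strictly greater than $t$ from $\partial\Omega$. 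Your final target, pushing $\tilde\Phi_t^\eps(\partial\Omega)$ beyond the level set $\{\operatorname{dist}(\cdot,\partial\Omega)=t\}$, is exactly right; just reach it via this uniform local flatness rather than through a tubular-neighborhood projection that need not exist.
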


A related Hadamard formula was proved in \cite{HuSoXu} in the case where $\Omega$ is convex with $C^2$ boundary having positive Gauss curvature. The proof in \cite{HuSoXu} depends on the convexity but, as we show here, the validity of the result does not.

By combining Theorems~\ref{thm: BM lambdaq} and~\ref{thm: Hadamard Minkowski sum}, one readily deduces the inequality in Theorem~\ref{thm: semi-linear eigenvalue} under the assumption that $\Omega$ has $C^1$ boundary. To obtain the result under the weaker assumption of Lipschitz boundary, we need some rather deep results on the existence of the normal derivative due to Jerison and Kenig~\cite{JerisonKenig_BAMS81}; see also~\cite{verchota_layer_1984}.

This concludes our sketch of the strategy of the proof of Theorem~\ref{thm: semi-linear eigenvalue}. We end this introduction by emphasizing that for $q=2$ our bounds concern the lowest eigenvalue of the Laplacian. For bounds for higher eigenvalues, including those of variable coefficient operators, see, e.g.,~\cite{HaTa} and~\cite[Theorem 4.4]{KeLiSh}. These bounds, however, do not have an isoperimetric character. For an isoperimetric upper bound on $\| u_{q, \Omega} \|_{L^k(\Omega)}$ with $k\geq q$, see~\cite{HuDa}.


\section{Preliminaries}\label{sec: Prelminaries}

\subsection{The minimization problem \texorpdfstring{$\lambda_q(\Omega)$}{lambda} and its minimizer}

In this subsection, we discuss some aspects of the minimization problem~\eqref{def: semi-linear eigenvalue} that we shall need later on. Throughout we shall assume that $\Omega \subset \R^d$, $d\geq 2$, is a non-empty open set. 

Clearly $\lambda_q$ is invariant under translations, while under dilations it obeys
\begin{equation*}
	\lambda_q(s\Omega)= s^{-1/\alpha_q}\lambda_q(\Omega)\quad \text{for all}\ s>0
\end{equation*}
with $\alpha_q = (2+d(2/q-1))^{-1}$.

If $\Omega = \cup_{j\geq 1} \Omega_j$, $\Omega_j \cap \Omega_{j'}=\emptyset$ for $j \neq j'$, then the quantity $\lambda_q(\Omega)$ can be written in terms of the corresponding quantities for the elements of the union, namely, 
\begin{equation}\label{eq: spin formula0}
	\lambda_q(\Omega) = \biggl(\sum_{j\geq 1}\lambda_{q}(\Omega_j)^{-\frac{q}{2-q}}\biggr)^{- \frac{2-q}{q}} \ \text{if}\ 1\leq q<2 \, , \quad \mbox{and}\quad 
	\lambda_2(\Omega) = \min_{j\geq 1} \lambda_2(\Omega_j) \, ;
\end{equation}
see, e.g.,~\cite{BrascoFranzina_Semilinear_overview_2020}. We remark that for $q=1$ the first formula is nothing but the additivity of the torsional rigidity under disjoint unions.

We record a simple continuity property.

\begin{lemma}\label{lem: Hausdorff interior continuity}
	Fix $1\leq q \leq 2$ and let $\Omega \subset \R^d$ be a non-empty open set. Let $\{\Omega_j\}_{j\geq 1}$ be a sequence of open sets with $\Omega_j \subset \Omega_{j+1}$ for all $j\geq 1$, $\cup_{j\geq 1}\Omega_j= \Omega$, and such that $\Omega_j \cap B_R$ converges to $\Omega \cap B_R$ with respect to the Hausdorff distance for any $R>0$. Then
	\begin{equation*}
		\lim_{j \to \infty} \lambda_q(\Omega_j) = \lambda_q(\Omega)\, .
	\end{equation*}
\end{lemma}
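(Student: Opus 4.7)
The plan is to bound $\lim_{j\to\infty}\lambda_q(\Omega_j)$ from both sides, with one direction coming from monotonicity and the other from the construction of admissible test functions. Since $\Omega_j \subset \Omega_{j+1} \subset \Omega$, extension by zero gives inclusions $C^\infty_0(\Omega_j) \subset C^\infty_0(\Omega_{j+1}) \subset C^\infty_0(\Omega)$. The variational characterization~\eqref{def: semi-linear eigenvalue} then yields $\lambda_q(\Omega_j) \geq \lambda_q(\Omega_{j+1}) \geq \lambda_q(\Omega)$, so the sequence $\{\lambda_q(\Omega_j)\}_j$ is non-increasing and its limit $L := \lim_{j\to\infty}\lambda_q(\Omega_j)$ exists and satisfies $L \geq \lambda_q(\Omega)$.

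For the matching upper bound, I would exploit that any compactly supported test function for $\Omega$ is already a test function for every sufficiently large $\Omega_j$. Given $\varphi \in C^\infty_0(\Omega)\setminus\{0\}$, the set $K := \supp\varphi$ is compact in $\Omega = \bigcup_j \Omega_j$; since the open sets $\Omega_j$ are increasing, compactness of $K$ produces an index $j_0$ with $K \subset \Omega_{j_0}$, and hence $\varphi \in C^\infty_0(\Omega_j)$ for every $j \geq j_0$. Using $\varphi$ as a trial function in the problem defining $\lambda_q(\Omega_j)$ therefore gives
\[
	\lambda_q(\Omega_j) \leq \frac{\|\nabla\varphi\|_{L^2(\Omega_j)}^2}{\|\varphi\|_{L^q(\Omega_j)}^2} = \frac{\|\nabla\varphi\|_{L^2(\Omega)}^2}{\|\varphi\|_{L^q(\Omega)}^2}
	\qquad \text{for all } j \geq j_0.
\]
Passing to the limit $j\to\infty$ and then taking the infimum over $\varphi \in C^\infty_0(\Omega)\setminus\{0\}$ yields $L \leq \lambda_q(\Omega)$, which combined with the previous paragraph gives the claimed equality.

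I do not expect any genuine obstacle here: the argument relies only on inclusion monotonicity of $\lambda_q$ and the elementary observation that any compact subset of a countable, increasing union of open sets already lies in a single member of the union. In particular, the Hausdorff-distance hypothesis on $\Omega_j \cap B_R$ does not appear to play an active role in this specific statement; I presume it is recorded in the lemma because it is a standing assumption on the approximating family used further on in the paper.
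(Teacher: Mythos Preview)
Your proof is correct and follows essentially the same approach as the paper's own argument: both use inclusion monotonicity for the lower bound and the fact that a compactly supported test function for $\Omega$ eventually lies in $\Omega_j$ for the upper bound. Your remark that the Hausdorff-distance hypothesis is not actually used in the proof is also accurate; the paper's phrasing (``$\dist(\supp\varphi,\partial\Omega)>0$'') is just another way of invoking the same compactness-in-an-increasing-open-cover argument you spell out.
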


\begin{proof}
	By monotonicity under set inclusions $\lambda_q(\Omega)\leq \lambda_q(\Omega_j)$ for all $j$. To prove the reverse inequality we argue as follows. For any $\eps>0$ there exists $\varphi \in C_0^\infty(\Omega)$ such that
	\begin{equation*}
		\lambda_q(\Omega)\geq \frac{\|\nabla \varphi\|_{L^2(\Omega)}^2}{\|\varphi\|_{L^q(\Omega)}^2}-\eps\, .
	\end{equation*}
	Since $\supp \varphi$ is compact and $\dist(\supp \varphi, \partial\Omega)>0$, it holds for $j$ sufficiently large that $\supp \varphi \subset \Omega_j$. Consequently, by the definition of $\lambda_q(\Omega_j)$, 
	\begin{equation*}
		\frac{\|\nabla \varphi\|_{L^2(\Omega)}^2}{\|\varphi\|_{L^q(\Omega)}^2}
		=
		\frac{\|\nabla \varphi\|_{L^2(\Omega_j)}^2}{\|\varphi\|_{L^q(\Omega_j)}^2}\geq \lambda_q(\Omega_j)\, .
	\end{equation*}
	Thus, for any $\eps>0$ and $j$ sufficiently large $\lambda_q(\Omega)+\eps \geq \lambda_q(\Omega_j)\geq \lambda_q(\Omega)$. Since $\eps>0$ was arbitrary this proves the lemma.
\end{proof}

Next, we turn our attention to the existence of a minimizer for the variational problem~\eqref{def: semi-linear eigenvalue}. For the characterization of cases of equality in the  Brunn--Minkowski inequality for $\lambda_q$ (Theorem \ref{thm: BM lambdaq}), it will be necessary to work under less restrictive assumptions on $\Omega$ than the boundedness and Lipschitz regularity needed for our isoperimetric inequalities.

Assuming that
$$
\lambda_q(\Omega)>0
$$
we deduce that the completion $\mathcal D^{1, 2}_0(\Omega)$ of $C_0^\infty(\Omega)$ with respect to the norm $u\mapsto \|\nabla u\|_{L^2(\Omega)}$ is well defined as a space of almost everywhere defined functions and continuously embedded into $L^q(\Omega)$. Under this assumption the infimum in~\eqref{def: semi-linear eigenvalue} does not change when $C_0^\infty(\Omega)$ is replaced by $\mathcal D^{1, 2}_0(\Omega)$. In what follows, slightly abusing notation, we call a function $u$ in $\mathcal D^{1, 2}_0(\Omega)$ a minimizer if $\|\nabla u\|^2_{L^2(\Omega)} = \lambda_q(\Omega) \|u\|_{L^2(\Omega)}^2>0$.

If $1\leq q<2$, then the assumption $\lambda_q(\Omega)>0$ implies already that the embedding $\mathcal D^{1, 2}_0(\Omega)\subset L^q(\Omega)$ is compact (see~\cite[Theorem 15.6.2]{MazyaSobolevSpaces} and also \cite{BrascoRuffini}) and therefore a minimizer exists. Also, if $|\Omega|<\infty$, then for any $1\leq q\leq 2$ one has $\lambda_q(\Omega)>0$, $\mathcal D^{1, 2}_0(\Omega)=H^1_0(\Omega)$ and the above embedding is compact, so again a minimizer exists.

Whenever there is a minimizer, there is one that is non-negative and normalized in $L^q(\Omega)$. Throughout the paper, $u_{q, \Omega}$ will denote a minimizer with the latter properties. When $1\leq q<2$, there is a unique minimizer with these properties. If $\Omega$ is connected and $q= 2$, minimizers with these properties are again unique; in the multiply connected case $\Omega = \cup_{j\geq 1} \Omega_j$ with connected, open and pairwise disjoint sets $\Omega_j$ we have $\lambda_2(\Omega)=\min_{j\geq 1} \lambda_2(\Omega_j)$ and any minimizer is given by a linear combination of the minimizers for the connected components $\Omega_j$ satisfying $\lambda_2(\Omega)= \lambda_2(\Omega_j)$. In this context we also mention that for $1\leq q<2$ we have
\begin{equation}\label{eq: spin formula}
	u_{q, \Omega} = \sum_{j\geq 1} \Bigl(\frac{\lambda_{q}(\Omega)}{\lambda_q(\Omega_j)}\Bigr)^{\frac{1}{2-q}}u_{q, \Omega_j}\, .
\end{equation}

The following lemma proves a stability property of minimizers.

\begin{lemma}\label{lem: strong eigenfunction conv}
	Fix $1\leq q\leq 2$ and let $\Omega \subset \R^d$ be an open set with finite measure. If $q=2$ assume the minimizer for $\lambda_2(\Omega)$ is unique (up to a multiplicative constant). If $\{u_j\}_{j\geq 1}\subset H_0^1(\Omega)$ satisfies 
	\begin{equation*}
		\lim_{j\to \infty} \frac{\|\nabla u_j\|_{L^2(\Omega)}^2}{\|u_j\|_{L^q(\Omega)}^2} = \lambda_q(\Omega)\, , 
	\end{equation*}
	then $\|u_j\|_{L^q(\Omega)}^{-1} |u_j|\to u_{q, \Omega}$ in $H_0^1(\Omega)$.
\end{lemma}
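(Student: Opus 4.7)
The plan is a standard direct-method argument, exploiting the uniqueness of the non-negative $L^q$-normalized minimizer. First, I would replace $u_j$ by $v_j := |u_j|/\|u_j\|_{L^q(\Omega)}$; since $\bigl|\nabla|u_j|\bigr|=|\nabla u_j|$ almost everywhere for $u_j\in H_0^1(\Omega)$, the sequence $\{v_j\}$ lies in $H_0^1(\Omega)$, consists of non-negative functions with $\|v_j\|_{L^q(\Omega)}=1$ and $\|\nabla v_j\|_{L^2(\Omega)}^2\to\lambda_q(\Omega)$, and is therefore bounded in $H_0^1(\Omega)$. It thus suffices to prove $v_j\to u_{q,\Omega}$ in $H_0^1(\Omega)$.

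Next, since $|\Omega|<\infty$ and $1\leq q\leq 2$ lies strictly below the critical Sobolev exponent, the Rellich--Kondrachov theorem gives a compact embedding $H_0^1(\Omega)\hookrightarrow L^q(\Omega)$. Extracting a subsequence (not relabeled), I would obtain $v\in H_0^1(\Omega)$ with $v_j\rightharpoonup v$ weakly in $H_0^1(\Omega)$ and $v_j\to v$ strongly in $L^q(\Omega)$, whence $v\geq 0$ and $\|v\|_{L^q(\Omega)}=1$. Weak lower semicontinuity of the Dirichlet integral then yields
\[
\lambda_q(\Omega)\leq \|\nabla v\|_{L^2(\Omega)}^2 \leq \liminf_{j\to\infty}\|\nabla v_j\|_{L^2(\Omega)}^2 = \lambda_q(\Omega),
\]
so $v$ is a non-negative $L^q$-normalized minimizer. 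By the uniqueness discussion preceding the lemma---using the explicit hypothesis when $q=2$---one concludes $v=u_{q,\Omega}$.

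The displayed chain of inequalities also forces $\|\nabla v_j\|_{L^2(\Omega)}\to\|\nabla v\|_{L^2(\Omega)}$. Combined with the weak convergence $\nabla v_j\rightharpoonup\nabla v$ in the Hilbert space $L^2(\Omega;\R^d)$, the Radon--Riesz property upgrades this to strong convergence $\nabla v_j\to\nabla v$ in $L^2(\Omega)$, which via Poincar\'e's inequality amounts to convergence in $H_0^1(\Omega)$. The subsequence principle then yields convergence of the full original sequence, since every subsequence admits a further subsequence converging to the same limit $u_{q,\Omega}$. The main (and really only) conceptual subtlety is the possible non-uniqueness of minimizers when $q=2$ on a multiply connected $\Omega$; this is precisely excluded by the lemma's hypothesis, so I foresee no serious obstacles.
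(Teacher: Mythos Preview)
Your proposal is correct and follows essentially the same route as the paper: normalize and take absolute values, use compactness of $H_0^1(\Omega)\hookrightarrow L^q(\Omega)$ and weak lower semicontinuity to identify the limit as the unique minimizer, then upgrade weak to strong convergence via norm convergence. The only differences are cosmetic---you spell out the Radon--Riesz step and the subsequence principle explicitly, whereas the paper cites \cite[Proposition~3.32]{Brezis_FunctionalAnalysis} and leaves the full-sequence argument implicit.
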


\begin{proof}
	Set $\tilde u_j = \|u_j\|_{L^q(\Omega)}^{-1} |u_j|$. The sequence $\{\tilde u_j\}_{j\geq 1}$ is bounded in $H_0^1(\Omega)$ and normalized in $L^q(\Omega)$. By passing to a subsequence, we may assume that $\tilde u_j$ converges weakly in $H_0^1(\Omega)$. By the compactness of the embedding $H_0^1(\Omega) \hookrightarrow L^q(\Omega)$, this subsequence converges strongly in $L^q(\Omega)$ to some non-negative limit $\tilde u$. By the sequential weak lower semi-continuity of the Dirichlet energy, we conclude that $\tilde u$ is a non-negative $L^q$-normalized minimizer of the Rayleigh quotient~\eqref{def: semi-linear eigenvalue}. By the assumption on $\Omega$ such minimizers are unique and we conclude that $\tilde u = u_{q, \Omega}$. Moreover, since $\|\nabla \tilde u_{j}\|_{L^2(\Omega)}^2= \lambda_q(\Omega)+o(1)$, we conclude that $\tilde u_{j}$ actually converges to $u_{q, \Omega}$ in $H_0^1(\Omega)$ (see, for instance,~\cite[Proposition~3.32]{Brezis_FunctionalAnalysis}).
\end{proof}

The minimizer $u_{q, \Omega}$ solves the equation 
\begin{equation}\label{eq: nonlinear equation}
	\begin{cases}
		-\Delta u = \lambda_q(\Omega)u^{q-1} & \mbox{in }\Omega\, , \\
		u =0 & \mbox{on }\partial\Omega\, .
	\end{cases}
\end{equation}

We now use elliptic regularity \cite{gilbarg_elliptic_2001} to deduce further information on $u_{q, \Omega}$. 

\begin{lemma}\label{ellipticreg}
	Fix $1\leq q\leq 2$ and let $\Omega\subset\R^d$ be an open set such that $\lambda_q(\Omega)>0$ and such that there is a minimizer $u_{q, \Omega}$. Then $u_{q, \Omega}\in L^\infty(\Omega)\cap C^\infty_{\mathrm{loc}}(\Omega)$. Moreover, if $1\leq q<2$, then $u>0$ in $\Omega$ and, if $q=2$, then in each connected component of $\Omega$ either $u_{q, \Omega}= 0$ or $u_{q, \Omega}>0$.
\end{lemma}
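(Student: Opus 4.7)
The plan is to proceed in three stages: first to obtain a global $L^\infty$ bound by Moser iteration applied to~\eqref{eq: nonlinear equation}, then to pick up interior elliptic regularity by a standard bootstrap, and finally to establish strict positivity via the strong maximum principle combined with the splitting formula~\eqref{eq: spin formula}.

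For the first stage, I would write $u := u_{q, \Omega}\in \mathcal D^{1, 2}_0(\Omega)$ for the non-negative $L^q$-normalized minimizer and observe that the hypothesis $\lambda_q(\Omega)>0$ gives the continuous embedding $\mathcal D^{1, 2}_0(\Omega)\hookrightarrow L^q(\Omega)$, while for $d\geq 3$ the Sobolev embedding additionally supplies $u\in L^{2^*}(\Omega)$ with $2^*=2d/(d-2)$. Testing~\eqref{eq: nonlinear equation} with the truncated power $(u\wedge N)^{2s+1}$ for $s\geq 0$, passing $N\to\infty$ by monotone convergence, and applying Sobolev's inequality should produce the familiar recursive estimate
$$
	\|u\|_{L^{(s+1)2^*}}^{2(s+1)}\leq \frac{(s+1)^2\lambda_q(\Omega)}{2s+1}\, S\, \|u\|_{L^{q+2s}}^{q+2s}\, ,
$$
where $S$ denotes the Sobolev constant. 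Since $q\leq 2$, the integrability exponent grows by at least a factor $2^*/2>1$ per iteration, and the standard Moser bookkeeping of constants will give $u\in L^\infty(\Omega)$; the two-dimensional case is handled identically using an arbitrary sufficiently large exponent in place of $2^*$.

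In the second stage, since $u$ is bounded the right-hand side $\lambda_q(\Omega)u^{q-1}$ of~\eqref{eq: nonlinear equation} is bounded, so Calder\'on--Zygmund theory~\cite{gilbarg_elliptic_2001} will give $u\in W^{2, p}_\mathrm{loc}(\Omega)$ for every $p<\infty$ and hence $u\in C^{1, \alpha}_\mathrm{loc}(\Omega)$. The identity $-\Delta u = \lambda_q(\Omega)u^{q-1}\geq 0$ makes $u$ a non-negative superharmonic function, so the strong minimum principle will force, on each connected component of $\Omega$, either $u\equiv 0$ or $u>0$. On any component where $u>0$ the nonlinearity $t\mapsto t^{q-1}$ is smooth on the positive range of $u$, and a standard Schauder bootstrap in~\eqref{eq: nonlinear equation} will promote $u$ to $C^\infty_\mathrm{loc}$; for $q=2$ the equation is linear with smooth coefficients and this last step is immediate.

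For the third stage, which establishes the additional claim in the case $1\leq q<2$, I would invoke the splitting formula~\eqref{eq: spin formula} together with the uniqueness of the non-negative normalized minimizer: writing $\Omega=\bigsqcup_{j} \Omega_j$ for the decomposition into connected components, $u_{q, \Omega}$ must coincide with the combination $\sum_{j\geq 1}(\lambda_q(\Omega)/\lambda_q(\Omega_j))^{1/(2-q)}u_{q, \Omega_j}$, each factor of which is strictly positive on its component by the previous stage; hence $u_{q, \Omega}>0$ throughout $\Omega$. The main obstacle will be the $L^\infty$ bound itself, since $\Omega$ is permitted to have infinite measure: there is no single bounded ambient set on which one could iterate locally, and a geometric cutoff would degrade the constants. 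Using the truncation $u\wedge N$ directly in the test function, together with the starting integrability $u\in L^q\cap L^{2^*}$ afforded by $\lambda_q(\Omega)>0$, is what makes the iteration feasible.
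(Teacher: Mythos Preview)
Your proposal is correct and follows the same overall arc as the paper: Moser iteration for the $L^\infty$ bound, then interior elliptic regularity, then positivity, then bootstrap to $C^\infty_{\mathrm{loc}}$. The paper likewise cites Moser iteration (via \cite{BrascoFranzina_Semilinear_overview_2020}) for the first step, uses Riesz potential estimates where you invoke Calder\'on--Zygmund (these are interchangeable here), and appeals to \eqref{eq: spin formula} exactly as you do to rule out components on which $u\equiv 0$ when $1\leq q<2$.

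The one substantive difference is the positivity step. You apply the strong minimum principle directly to the non-negative superharmonic function $u$; this is valid, since the strong minimum principle is a purely interior statement requiring no information at $\partial\Omega$. The paper instead remarks that, absent continuous extension to $\partial\Omega$, one must ``argue differently,'' and derives the mean-value-type identity
\[
u(x) = |B_r|^{-1}\!\int_{B_r(x)} u\, dy \;+\; \lambda_q(\Omega)\, r^2\, |B_r|^{-1}\!\int_{B_r(x)} k\bigl((y-x)/r\bigr)\, u(y)^{q-1}\, dy
\]
with an explicit non-negative kernel $k$, from which $u(x)=0$ forces $u\equiv 0$ on $B_r(x)$. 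This is in effect a self-contained proof of the strong maximum principle tailored to this equation. Your route is shorter and standard; the paper's stated concern about boundary continuity seems overcautious, and what their argument actually buys is a proof that does not quote the strong maximum principle as a black box.
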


\begin{proof}
	We focus on the case $1<q<2$, for the results are classical in the cases $q=1, 2$ and obtained by similar, but simpler arguments. We write $u=u_{q, \Omega}$. Using, for instance, the technique of Moser iteration, one can infer that $u\in L^\infty(\Omega)$. In fact, there is a constant $C$, depending only on $d, q$, so that
	\begin{equation}\label{eq:linfty}
		\|u_{q, \Omega_j}\|_{L^\infty(\Omega)} \leq C \lambda_q(\Omega_j)^{\frac{d}{2d-q(d-2)}}\, .
	\end{equation}
	This appears as~\cite[Proposition 2.5]{BrascoFranzina_Semilinear_overview_2020}. (The assumption there that the embedding $\mathcal D^{1, 2}_0(\Omega)\subset L^q(\Omega)$ is compact is not necessary for the proof.) Thus, by Riesz potential estimates $u\in C^{1, \alpha}_{\text{loc}}(\Omega)$ for any $\alpha<1$. The local Lipschitz continuity of $u$ and the H\"older-$(q-1)$-continuity of $t\mapsto t^{q-1}$ imply that $u^{q-1}\in C^{0, q-1}_{\text{loc}}(\Omega)$. Thus, by Schauder theory, $u\in C_{\text{loc}}^{2, q-1}(\Omega)$ and the first equation in \eqref{eq: nonlinear equation} holds classically. 
	
	We now show that, if $1\leq q< 2$, then $u>0$ everywhere in $\Omega$, and, if $q=2$, then in each connected component of $\Omega$ either $u>0$ or $u\equiv 0$. If we would know that $u$ extended continuously to $\partial\Omega$, these assertions would be a consequence of the maximum principle, but, since we do not make any assumptions on $\partial\Omega$, we need to argue differently. We first note that $u$ (just like any other function in $C^1_{\text{loc}}(\Omega)$) satisfies, for any $r>0$ and any $x\in\R^d$ with $\dist(x, \Omega^c)<r$, 
	$$
	u(x) = |B_r|^{-1} \int_{B_r(x)} u(y)\, dy + r^2 |B_r|^{-1} \int_{B_r(x)} \nabla_y k((y-x)/r) \cdot \nabla u(y)\, dy \, .
	$$
	Here, for $|z|<1$, 
	$$
	k(z) :=
	\begin{cases}
		(d-2)^{-1} \bigl( d^{-1} |z|^{-d+2} + (2^{-1} -d^{-1}) |z|^2 - 2^{-1} \bigr)
		& \text{if}\ d\neq 2 \, , \\
		2^{-1} \ln(1/|z|) + 4^{-1} |z|^2 - 4^{-1} & \text{if}\ d=2 \, .
	\end{cases}
	$$
	Using the equation for $u$ and the fact that $k(z)=0$ for $|z|=1$ one deduces that
	$$
	u(x) = |B_r|^{-1} \int_{B_r(x)} u(y)\, dy + \lambda_q(\Omega)\, r^2\, |B_r|^{-1} \int_{B_r(x)} k((y-x)/r) u(y)^{q-1}\, dy \, .
	$$
	Assume now that $u(x)=0$ for some $x\in\Omega$. Then this identity, together with the non-negativity and continuity of $u$, implies that $u=0$ in $B_r(x)$. Thus, $\{ x\in\Omega: u(x)=0\}$ is both open and closed in $\Omega$ and, therefore, $u$ vanishes in the connected component of $\Omega$ containing $x$. This is the claimed assertion for $q=2$. For $1\leq q<2$ we obtain a contradiction to \eqref{eq: spin formula}.
	
	Thus, we have shown that $u$ is bounded away from zero on every compact set contained in $\Omega$. Using this information, the regularity of $u$ can be bootstrapped and we finally obtain that $u\in C^\infty_{\text{loc}}(\Omega)$.
\end{proof}

We finally mention that we often extend $u_{q, \Omega}$ by zero to $\R^d\setminus\Omega$ and consider the resulting function on $\R^d$. If $|\Omega|<\infty$, then $u_{q, \Omega}\in H_0^1(\Omega)$ and therefore its extension by zero belongs to $H^1(\R^d)$.

\subsection{The normal derivatives of \texorpdfstring{$u_{q, \Omega}$}{u}}
\label{sec: normal derivative}

Before moving on to our main argument, we show that the normal derivative of $u_{q, \Omega}$ makes sense as an element of $L^2(\partial\Omega)$ under the assumption that $\Omega$ is bounded with Lipschitz boundary. 

If $\Omega$ has $C^{1, \alpha}$-regular boundary for some $0<\alpha<1$, then Schauder theory (see, for instance,~\cite[Theorem 8.33]{gilbarg_elliptic_2001}) implies that $u_{q, \Omega} \in C^{1, \alpha}(\overline \Omega)$ and, in particular, its normal derivative on the boundary is defined in the classical sense and the integral in Theorem~\ref{thm: semi-linear eigenvalue} is well defined. 

To explain the meaning of the normal derivative in Theorem~\ref{thm: semi-linear eigenvalue} for arbitrary bounded open Lipschitz sets, let $\Gamma$ denote the Newtonian kernel in $\R^d$ (see, e.g.~\cite{gilbarg_elliptic_2001}), that is, 
\begin{equation*}
	\Gamma(x) := \begin{cases}
		-\frac{1}{2\pi}\log|x| & d=2\\[3pt]
		c_d|x|^{2-d} & d\neq 2, 
	\end{cases}
\end{equation*}
with a suitable constant $c_d$ and define
\begin{equation*}
  w_{q, \Omega} := -\lambda_q(\Omega)\ \Gamma * u_{q, \Omega}^{q-1} 
\end{equation*}
so that $-\Delta w_{q, \Omega}=\lambda_q(\Omega)u_{q, \Omega}^{q-1}$ in $\R^d$.
Here $u_{q, \Omega}^{q-1}$ should be interpreted as $\1_\Omega$ when $q=1$. (As an aside, we note that, in this case, one could instead of $w_{1, \Omega}$ consider $w = -\lambda_{1}(\Omega)\frac{|x|^2}{2d}\in C^\infty(\R^d)$ and carry out the argument in the same manner.)

Since $u_{q, \Omega}\in L^\infty$ has compact support, we deduce from Riesz potential estimates that $w_{q, \Omega}\in C^{1, \alpha}(\R^d)$ for all $\alpha<1$ (see, for instance,~\cite[Theorem~10.2]{LiebLoss}). In particular, the normal derivative of  $w_{q, \Omega}$ is defined in the classical sense and it suffices to make sense of the normal derivative of the function
\begin{equation*}
   v_{q, \Omega} := u_{q, \Omega}-w_{q, \Omega}
\end{equation*}
Note that this function satisfies
\begin{equation*}
  \begin{cases}
    -\Delta v_{q, \Omega} = 0 \quad & \mbox{in }\Omega\, , \\
    v_{q, \Omega} = -w_{q, \Omega} & \mbox{on }\partial\Omega\, .
  \end{cases}
\end{equation*}
The fact that the normal derivative of $v_{q, \Omega}$ is well-defined and belongs to $L^2(\partial\Omega)$ follows from the results of Jerison--Kenig~\cite{JerisonKenig_BAMS81} and Verchota~\cite{verchota_layer_1984} (see also~\cite[Theorems~5.14.9 \& 5.14.10]{medkova_laplace_2018}). 
Indeed, by these results $\nabla v_{q, \Omega}$ has non-tangential limits almost everywhere on $\partial \Omega$ and the non-tangential maximal function of $|\nabla v_{q, \Omega}|$ belongs to $L^2(\partial\Omega)$. Furthermore, the fact that $u_{q, \Omega}$ vanishes on $\partial\Omega$ implies that the pointwise limit of its gradient is almost everywhere normal to the boundary.

\section{The Brunn--Minkowski inequality for \texorpdfstring{$\lambda_{q}(\Omega)$}{lambda}}
\label{sec: BM inequality}
  
The topic of the current section is the first of the two key ingredients in our strategy for proving Theorem~\ref{thm: semi-linear eigenvalue}, namely, the Brunn--Minkowski-type inequality in Theorem~\ref{thm: BM lambdaq}:
\begin{equation*}
   \lambda_{q}((1-s)\Omega_0+s\Omega_1) \leq \Bigl((1-s)\lambda_{q}(\Omega_0)^{-\alpha_q}+s\lambda_{q}(\Omega_{1})^{-\alpha_q}\Bigr)^{-1/\alpha_q} \, .
 \end{equation*}
This inequality is essentially due to Brascamp--Lieb~\cite{BrascampLieb_JFA76} for $q=2$, Borell~\cite{Borell_85} for $q=1$, and Colesanti~\cite{colesanti_brunnminkowski_2005} for $1\leq q<2$. However, the statements in these references are under slightly stronger assumptions than what we state here. Indeed, Colesanti~\cite{colesanti_brunnminkowski_2005} assumes the sets $\Omega_0, \Omega_1$ to be bounded with $C^2$-regular boundary, Brascamp and Lieb~\cite{BrascampLieb_JFA76} impose that the sets be connected and have finite measure, and Borell~\cite{Borell_85} assumes the sets to be bounded. That being said, deducing the inequality for arbitrary non-empty open sets from these results is not difficult. 

In \cite{colesanti_brunnminkowski_2005}, Colesanti characterized the cases of equality in the Brunn--Minkowski-type inequality for $\lambda_q$ for $1\leq q<2$ under the assumption that the sets are $C^2$ and bounded. As stated in Theorem \ref{thm: BM lambdaq}, in this paper we will show that these additional assumptions on the sets are not necessary.
Since this is somewhat technical and not central to the core topic of this paper, we defer its proof to Appendix~\ref{Appendix}.

Before we prove that the inequality for arbitrary non-empty open sets follows from the known results, we note that Colesanti~\cite{colesanti_brunnminkowski_2005} and Borell~\cite{Borell_85} do not state their results in terms of $\lambda_q(\Omega)$ but in terms of the quantity
\begin{equation*}
    {\bf F}(\Omega) := \int_\Omega |\nabla \tilde u(x)|^2\, dx
  \end{equation*}
  where $\tilde u$ is the unique positive solution of the (perhaps simpler looking) PDE
  \begin{equation*}
     \begin{cases}
       -\Delta \tilde u = \tilde u^{q-1} & \mbox{in }\Omega\, , \\
       \tilde u = 0 & \mbox{on }\partial \Omega\, .
     \end{cases}
  \end{equation*} 
  By multiplying the solution with a constant, one deduces that $\tilde u = \lambda_q(\Omega)^{\frac{1}{q-2}}u_{q, \Omega}$, and ${\bf F}(\Omega) = \lambda_q(\Omega)^{\frac{q}{q-2}}$ and the inequality of Theorem~\ref{thm: BM lambdaq} is equivalent to a corresponding inequality for ${\bf F}(\Omega)$. Colesanti's (and Borell's) choice of working with ${\bf F}$ is more natural when $q=1$ since the PDE is then the classical torsion problem. Our formulation is adapted to also naturally encompass $q=2$, where the PDE becomes linear and the dependence on~$\lambda_2$ cannot be eliminated by multiplying by a constant.

Let us first treat the trivial case when $\min\{\lambda_q(\Omega_0), \lambda_q(\Omega_1)\}=0$. For any $t\in (0, 1)$ the set $(1-t)\Omega_0 + t\Omega_1$ contains rescaled and translated copies of both $\Omega_0$ and $\Omega_1$. Therefore, by domain monotonicity, translation invariance and scaling homogeneity of $\lambda_q$ it follows that $\lambda_q((1-t)\Omega_0+t\Omega_1)=0$.

\subsection{The case \texorpdfstring{$q=2$}{q=2}}

In the case $q=2$, the inequality in Theorem~\ref{thm: BM lambdaq} can be deduced from an inequality due to Brascamp and Lieb~\cite{BrascampLieb_JFA76}. Indeed, in that paper they prove that for open and connected sets $\Omega_0, \Omega_1\subset \R^d$ with finite measure, 
\begin{equation}\label{eq: logconcavity heat trace}
  \Tr(e^{t\Delta_{(1-s)\Omega_0+s\Omega_1}}) \geq \bigl(\Tr(e^{t\Delta_{\Omega_0}})\bigr)^{1-s}\bigl(\Tr(e^{t\Delta_{\Omega_1}})\bigr)^{s}\, , \quad \mbox{for all } s\in [0, 1] \mbox{ and } t>0\, .
\end{equation}
Taking the logarithms, dividing by $t$, and letting $t\to \infty$, Brascamp and Lieb showed that the inequality~\eqref{eq: logconcavity heat trace} and the positivity of the spectral gap implies
\begin{equation}\label{eq: weaker BM lambda}
   \lambda_2((1-s)\Omega_0+s\Omega_1) \leq (1-s)\lambda_2(\Omega_0)+s\lambda_2(\Omega_1)\, .
 \end{equation}
Our next goal is to prove two things; first, that the assumption that $\Omega_0, \Omega_1$ are connected can dropped, and secondly, that~\eqref{eq: weaker BM lambda} implies the inequality in Theorem~\ref{thm: BM lambdaq} with $q=2$.  

To deduce the statement for sets with multiple connected components, one can argue as follows. Let $\Omega_0 = \cup_{j\in I_0} \Omega_0^j, \Omega_1 = \cup_{j\in I_1} \Omega_1^j$, with $\Omega_i^j \cap \Omega_i^{j'}=\emptyset$ for $i\in \{0, 1\}$ and $(j, j')\in (I_i\times I_i) \setminus \{j, j\}$. It holds that $\lambda_2(\Omega_i)= \min_{j\in I_i}\lambda_2(\Omega_i^j)$. Since
 \begin{equation}\label{eq: Minksum disjoint unions}
    (1-s)\Omega_0 + s\Omega_1 = \bigcup_{(j, j')\in I_0 \times I_1} \Bigl[(1-s)\Omega_0^j + s\Omega_1^{j'}\Bigr] \, , 
  \end{equation}
  the monotonicity under inclusion implies that
  \begin{equation*}
    \lambda_2((1-s)\Omega_0+s\Omega_1)\leq \min_{(j, j')\in I_0\times I_1} \lambda_2((1-s)\Omega_0^j+s\Omega_1^{j'})\, .
  \end{equation*} 
  The desired inequality follows by applying the inequality for each of the pairs $(\Omega_0^j, \Omega_1^{j'})$. We emphasize that while the sets $\{\Omega_i^j\}_{j\in I_i}$ are assumed to be disjoint, this is not generally the case for the sets in the union~\eqref{eq: Minksum disjoint unions}.

The fact that the a priori weaker inequality~\eqref{eq: weaker BM lambda} implies the inequality in Theorem~\ref{thm: BM lambdaq} with $q=2$ can be proved by the following argument (which is somewhat standard in the field but we include it for completeness). Set, for $j=0, 1$, 
\begin{equation*}
  \omega_j := \lambda_2(\Omega_j)^{1/2}\Omega_j
\end{equation*}
so that $\lambda_2(\omega_j)=1$. Then
\begin{equation*}
  (1-s)\Omega_0 + s\Omega_1 = ((1-s)\lambda_2(\Omega_0)^{-1/2}+s\lambda_2(\Omega_1)^{-1/2})((1-\tilde s)\omega_0 + \tilde s \omega_1)
\end{equation*}
with
\begin{equation*}
  \tilde s := \frac{s\lambda_2(\Omega_1)^{-1/2}}{(1-s)\lambda_2(\Omega_0)^{-1/2}+ s \lambda_2(\Omega_1)^{-1/2}} \in [0, 1]\, .
\end{equation*}
Thus, by the homogeneity of $\lambda_2$ and applying~\eqref{eq: weaker BM lambda} to $(1-\tilde s)\omega_0+ \tilde s \omega_1$, we deduce
\begin{align*}
  \lambda_2((1-s)\Omega_0+s\Omega_1)
  &=
  ((1-s)\lambda_2(\Omega_0)^{-1/2}+s\lambda_2(\Omega_1)^{-1/2})^{-2}\lambda_2((1-\tilde s)\omega_0 + \tilde s \omega_1)\\
  &\leq
  ((1-s)\lambda_2(\Omega_0)^{-1/2}+s\lambda_2(\Omega_1)^{-1/2})^{-2}\, .
\end{align*}
This proves Theorem~\ref{thm: BM lambdaq} for $q=2$.


\subsection{The general case \texorpdfstring{$1\leq q\leq 2$}{}}

We now prove that the assumptions of boundedness and regularity of the boundary of $\Omega_0, \Omega_1$ can be dropped in the works of  Borell~\cite{Borell_85} and Colesanti~\cite{colesanti_brunnminkowski_2005}. The same argument removes the remaining assumption that the measures of $\Omega_0, \Omega_1$ are finite for $q=2$.

Let $\Omega\subset \R^d$ be open and non-empty. There are bounded open sets $\Omega_j$ with $C^\infty$-regular boundary such that $\Omega_j \subset \Omega_{j+1}$, $\cup_j \Omega_j = \Omega$, and $\Omega_j \cap B_R$ converges with respect to the Hausdorff distance to $\Omega\cap B_R$ for any $R>0$. By Lemma~\ref{lem: Hausdorff interior continuity}, 
\begin{equation*}
  \lim_{j \to \infty}\lambda_q(\Omega_j) = \lambda_q(\Omega)\, .
\end{equation*}
In particular, for $\Omega_0, \Omega_1$ as in Theorem~\ref{thm: BM lambdaq} there are smooth exhaustions $\{\Omega_0^j\}_{j\geq 1}, \{\Omega_1^j\}_{j\geq 1}$ satisfying the assumptions of Lemma~\ref{lem: Hausdorff interior continuity}. For each $j\geq 1$ and $s\in [0, 1]$, we have $(1-s)\Omega_0+s\Omega_1\supset (1-s)\Omega_0^j + s\Omega_1^j$ and therefore, by the domain monotonicity of $\lambda_q$ and the validity of the Brunn--Minkowski inequality for smooth sets, 
\begin{equation*}
  \lambda_q((1-s)\Omega_0+s\Omega_1) \leq \lambda_q((1-s)\Omega_0^j + s\Omega_1^j) \leq \Bigl((1-s)\lambda_q(\Omega_0^j)^{-\alpha_q}+s \lambda_q(\Omega_1^j)^{-\alpha_q}\Bigr)^{-1/\alpha_q}\, .
\end{equation*}
By Lemma~\ref{lem: Hausdorff interior continuity}, the claimed inequality follows by sending $j \to \infty$.


\section{A Hadamard variational formula for \texorpdfstring{$\lambda_q(\Omega)$}{lambda}}
\label{sec: Hadamard}

In this and the next section, we provide the second key ingredient in the proof of our main result. Here, we shall prove the following Hadamard variational formula for $\lambda_q$.

\begin{theorem}\label{thm: variation}
	Let $1\leq q\leq 2$, let $\Omega \subset \R^d$ be an open set of finite measure and assume that there is a unique non-negative minimizer $u_{q, \Omega}$ for $\lambda_q(\Omega)$ which is normalized in $L^q(\Omega)$.
  Let $\Phi\in C^1((-T, T); W^{1, \infty}(\R^d; \R^d))$, $T>0$, be such that, for all $x\in\R^d$, $\Phi(0, x) = x$ and, for all $t\in (-T, T)$, the map $\Phi(t, \, \cdot\, )\colon \R^d \to \R^d$ is a bi-Lipschitz homeomorphism of an open neighborhood of $\Omega$ onto its image. Set $\dot\Phi = \partial_t\Phi|_{t=0}$ and let $D\dot \Phi$ be the Jacobian of~$\dot\Phi$. Then
  \begin{align}\label{eq: hadamards formula0}
    \lim_{t\to 0} \frac{\lambda_q(\Phi(t, \Omega))-\lambda_q(\Omega)}{t} & = -2 \int_\Omega \nabla u_{q, \Omega} \cdot \bigl( D\dot\Phi \bigr) \nabla u_{q, \Omega}\, dx \notag \\
    & \quad + \int_\Omega \biggl( |\nabla u_{q, \Omega}|^2 - \frac{2}{q}\lambda_q(\Omega) u_{q, \Omega}^q \biggr) \nabla\cdot\dot\Phi\, dx \, , 
  \end{align}
  If, in addition, $\Omega$ has Lipschitz boundary, then
	\begin{align}\label{eq: hadamards formula}
		\lim_{t\to 0} \frac{\lambda_q(\Phi(t, \Omega))-\lambda_q(\Omega)}{t} & = - \int_{\partial\Omega} \Bigl( \frac{\partial u_{q, \Omega}}{\partial\nu}\Bigr)^2 \nu\cdot\dot\Phi\, d \mathcal H^{d-1}(x) \, , 
	\end{align}
	where $\nu$ denotes the outward pointing unit normal field on $\partial\Omega$.
\end{theorem}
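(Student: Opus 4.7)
The plan is to pull the minimization problem on $\Phi(t,\Omega)$ back to the fixed set $\Omega$ via the bi-Lipschitz change of variables $y = \Phi(t,x)$, differentiate at $t=0$ via a matching upper/lower bound argument to obtain the bulk formula~\eqref{eq: hadamards formula0}, and then transform the resulting volume integral into a boundary integral via a Rellich--Pohozaev identity to obtain~\eqref{eq: hadamards formula}. For $v \in H^1_0(\Phi(t,\Omega))$, writing $\tilde v = v \circ \Phi(t,\cdot) \in H^1_0(\Omega)$ and changing variables gives
\begin{equation*}
  \lambda_q(\Phi(t,\Omega)) = \inf_{\tilde v \in H^1_0(\Omega)\setminus\{0\}} \frac{\int_\Omega \langle A(t,x)\nabla \tilde v,\nabla \tilde v\rangle\,dx}{\bigl(\int_\Omega J(t,x)\, |\tilde v|^q\,dx\bigr)^{2/q}}\,,
\end{equation*}
where $J(t,x) = |\det D\Phi(t,x)|$ and $A(t,x) = J(t,x)\,D\Phi(t,x)^{-1}D\Phi(t,x)^{-T}$. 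These coefficients are $C^1$ in $t$ with values in $L^\infty(\Omega)$, with $A(0)=I$, $J(0)=1$, $\partial_t J|_{t=0} = \nabla\cdot\dot\Phi$, and $\partial_t A|_{t=0} = (\nabla\cdot\dot\Phi)\, I - D\dot\Phi - (D\dot\Phi)^T$.

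The upper bound on $\lambda_q(\Phi(t,\Omega))$ is obtained by inserting the fixed test function $\tilde v = u_{q,\Omega}$ and differentiating the resulting quotient at $t=0$; using $\|\nabla u_{q,\Omega}\|_{L^2}^2 = \lambda_q(\Omega)$ and $\|u_{q,\Omega}\|_{L^q} = 1$, the derivative equals exactly the right-hand side of~\eqref{eq: hadamards formula0}. For the matching lower bound, pull back the minimizers $u_t := u_{q,\Phi(t,\Omega)}$ by setting $\tilde u_t = u_t \circ \Phi(t,\cdot) \in H^1_0(\Omega)$. A direct comparison of Rayleigh quotients (using that $A(t)\to I$ and $J(t)\to 1$ uniformly) yields $\lambda_q(\Phi(t,\Omega)) \to \lambda_q(\Omega)$, whereupon Lemma~\ref{lem: strong eigenfunction conv} and the uniqueness hypothesis give $\tilde u_t \to u_{q,\Omega}$ strongly in $H^1_0(\Omega)$. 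Using $\tilde u_t$ as a competitor for $\lambda_q(\Omega)$ and Taylor-expanding $A(t)$ and $J(t)$ (the first-order error vanishes because $u_{q,\Omega}$ is a critical point of the unperturbed quotient, so only the explicit $t$-derivatives of $A$ and $J$ survive the limit) yields the reverse inequality and hence~\eqref{eq: hadamards formula0}.

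To deduce~\eqref{eq: hadamards formula} from~\eqref{eq: hadamards formula0}, test the Euler--Lagrange equation $-\Delta u_{q,\Omega} = \lambda_q(\Omega)\,u_{q,\Omega}^{q-1}$ against $\dot\Phi\cdot\nabla u_{q,\Omega}$ and integrate by parts (a Rellich--Pohozaev-type identity). Using $u_{q,\Omega}=0$ on $\partial\Omega$, so that $\nabla u_{q,\Omega} = (\partial u_{q,\Omega}/\partial\nu)\nu$ and $|\nabla u_{q,\Omega}|^2 = (\partial u_{q,\Omega}/\partial\nu)^2$ there, together with $\int_\Omega \dot\Phi\cdot \nabla(u_{q,\Omega}^q/q)\,dx = -\tfrac{1}{q}\int_\Omega (\nabla\cdot\dot\Phi)\, u_{q,\Omega}^q\,dx$, one obtains
\begin{equation*}
  \int_{\partial\Omega}(\dot\Phi\cdot\nu)\Bigl(\tfrac{\partial u_{q,\Omega}}{\partial\nu}\Bigr)^2 d\Haus^{d-1}
  = 2\!\int_\Omega \nabla u_{q,\Omega}\cdot (D\dot\Phi)\nabla u_{q,\Omega}\,dx - \int_\Omega \Bigl(|\nabla u_{q,\Omega}|^2 - \tfrac{2\lambda_q(\Omega)}{q} u_{q,\Omega}^q\Bigr)\nabla\cdot\dot\Phi\,dx\,.
\end{equation*}
Comparing with~\eqref{eq: hadamards formula0} gives~\eqref{eq: hadamards formula}.

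The hard part is twofold. First, in the lower-bound half of the proof of~\eqref{eq: hadamards formula0} one needs genuine strong $H^1$-convergence of $\tilde u_t$, not just weak convergence of the quotient values; Lemma~\ref{lem: strong eigenfunction conv} applied to the sequence $\tilde u_t$ on the fixed domain $\Omega$ is the right tool, but one must verify that the quotient of $\tilde u_t$ tends to $\lambda_q(\Omega)$, which requires the aforementioned continuity $\lambda_q(\Phi(t,\Omega))\to\lambda_q(\Omega)$. Second, and more seriously, the Rellich--Pohozaev identity as written presumes enough regularity of $\nabla u_{q,\Omega}$ up to $\partial\Omega$, whereas in the Lipschitz case we only have the $L^2$ non-tangential boundary trace from the Jerison--Kenig--Verchota theory recalled in Section~\ref{sec: normal derivative}. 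The cleanest remedy is to exhaust $\Omega$ from inside by smooth subdomains $\Omega_n\uparrow\Omega$, apply the identity on each $\Omega_n$ (where $u_{q,\Omega}\in C^\infty(\overline{\Omega_n})$), and pass to the limit using the non-tangential $L^2$-estimate on $|\nabla u_{q,\Omega}|$ together with the $L^2$-convergence of the traces of $\nabla u_{q,\Omega}$ on the approximating surfaces to a.e.\ normal multiples of $\partial u_{q,\Omega}/\partial\nu$ on $\partial\Omega$.
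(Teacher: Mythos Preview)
Your proposal is correct and follows essentially the same route as the paper: pull back to the fixed domain via $A(t),J(t)$, use $u_{q,\Omega}$ as trial function for the upper bound and the pulled-back minimizer $\tilde u_t$ for the lower bound, invoke Lemma~\ref{lem: strong eigenfunction conv} for strong $H^1$-convergence, and then convert the bulk formula to the boundary one by testing the Euler--Lagrange equation against $\dot\Phi\cdot\nabla u_{q,\Omega}$. The paper organizes the two-sided estimate slightly differently (writing out the paired inequalities $\lambda(t)\le \frac{\lambda(0)+tn(t)}{(1+td(t))^{2/q}}$ and $\lambda(0)\le \frac{\lambda(t)-t\tilde n(t)}{(1-t\tilde d(t))^{2/q}}$ and passing to the limit in $\tilde n,\tilde d$ via the strong convergence) rather than invoking ``criticality'' explicitly, but this is the same mechanism; and for the boundary identity the paper simply appeals to interior smoothness plus the Jerison--Kenig--Verchota boundary trace from Section~\ref{sec: normal derivative}, which is exactly what your smooth-exhaustion remark makes precise.
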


We recall from Section~\ref{sec: Prelminaries} that the assumption that there is a unique (up to a multiplicative constant) minimizer for $\lambda_q(\Omega)$ is automatically satisfied for $1\leq q<2$. If $q=2$, it is satisfied if and only if there is a unique connected component $\Omega_j$ of $\Omega = \cup_{j\geq 1} \Omega_j$ for which $\lambda_2(\Omega_j)$ is minimal. In particular, it is satisfied if $\Omega$ is connected.

For $q=2$ the formula for the first variation of the eigenvalue is well-known and due to Hadamard~\cite{Ha}; see, for instance,~\cite[Theorem 5.7.1]{MR3791463} for a textbook presentation. Similarly, for $q=1$ a change of variables relates the first variation of $\lambda_q(\Omega)$ to that of the torsional rigidity, which can be found, for instance, in~\cite[Equation (5.103)]{MR3791463}. For $1<q<2$ we have not been able to find the result in the existing literature. 

In the cases $q=1, 2$, our proof is different from the standard proof presented, e.g.\ in~\cite{MR3791463}. From a conceptual point of view, these standard proofs establish, at the same time as establishing the differentiability of $\lambda_q(\Phi(t, \Omega))$, the differentiability of $u_{q, \Phi(t, \Omega)}$. The equation for the derivative of $u_{q, \Phi(t, \Omega)}$ is then used to derive a formula for $\lambda_q(\Phi(t, \Omega))$. Our approach completely bypasses the differentiability of $u_{q, \Phi(t, \Omega)}$. From a technical point of view, the standard proof of Hadamard formulas for $q=1, 2$ relies on the implicit function theorem, but it is not clear to us how to apply this because of the non-differentiability of $u\mapsto u^{q-1}$ at $u=0$ for $1<q<2$. Instead, our argument has a variational character.

\begin{lemma}\label{uniquevar}
	Let $\Omega$ and $\Phi$ be as in Theorem~\ref{thm: variation}. Then, for all sufficiently small $|t|$, there is a unique (up to multiplication by a constant) minimizer for $\lambda_2(\Phi(t, \Omega))$. 
\end{lemma}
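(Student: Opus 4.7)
The lemma is nontrivial only when $q=2$, since for $1\le q<2$ uniqueness of the non-negative $L^q$-normalized minimizer holds under the hypotheses of Theorem~\ref{thm: variation} by the discussion in Section~\ref{sec: Prelminaries}. My plan is to show that the connected components of $\Phi(t,\Omega)$ are the bi-Lipschitz images of the components of $\Omega$ and that the component achieving $\lambda_2(\Omega)$ remains the strict minimizer over components for all small $|t|$; the uniqueness assertion then follows from the $q=2$ part of Lemma~\ref{ellipticreg}.

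First I would decompose $\Omega=\bigsqcup_{j}\Omega_j$ into connected components and use the uniqueness hypothesis at $t=0$ together with \eqref{eq: spin formula0} to isolate the unique index $j_0$ with $\lambda_2(\Omega_{j_0})=\lambda_2(\Omega)$. The key preliminary step is to establish a uniform spectral gap away from $\Omega_{j_0}$. Since $|\Omega|<\infty$, the measures $|\Omega_j|$ are summable and thus tend to $0$, so the Faber--Krahn inequality yields $\lambda_2(\Omega_j)\to\infty$. In particular, only finitely many components lie in the interval $[\lambda_2(\Omega_{j_0}),\lambda_2(\Omega_{j_0})+1]$, and uniqueness forces strict inequality on each of them; together this produces some $\delta>0$ with $\inf_{j\ne j_0}\lambda_2(\Omega_j)\ge \lambda_2(\Omega_{j_0})+2\delta$.

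Next I would turn to the $t$-dependence. Because $\Phi(t,\cdot)$ is a bi-Lipschitz homeomorphism of an open neighborhood of $\Omega$, it carries the decomposition $\Omega=\bigsqcup_j\Omega_j$ onto the decomposition of $\Phi(t,\Omega)$ into its connected components $\Phi(t,\Omega_j)$. The hypothesis $\Phi\in C^1((-T,T);W^{1,\infty}(\R^d;\R^d))$ with $\Phi(0,x)=x$ gives $\|\nabla\Phi(t,\cdot)-I\|_{L^\infty}\le C|t|$ and the analogous bound on $\nabla\Phi(t,\cdot)^{-1}$ for small $|t|$. Using the pullback $u\mapsto u\circ\Phi(t,\cdot)^{-1}$ between $H^1_0(V)$ and $H^1_0(\Phi(t,V))$ together with the standard change of variables, both the Dirichlet and the $L^2$ integrals are distorted by a factor $1+O(|t|)$ uniformly in $V$, so
\[
(1-C'|t|)\,\lambda_2(V)\le \lambda_2(\Phi(t,V))\le (1+C'|t|)\,\lambda_2(V)
\]
for every open $V$ contained in the domain of $\Phi(t,\cdot)$.

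Applying this comparison to each $\Omega_j$ preserves the gap: for $|t|$ small enough, $\lambda_2(\Phi(t,\Omega_{j_0}))<\lambda_2(\Phi(t,\Omega_j))$ for all $j\ne j_0$, so $\lambda_2(\Phi(t,\Omega))$ is attained on the single connected component $\Phi(t,\Omega_{j_0})$; being connected, this component has a unique non-negative $L^2$-normalized minimizer, and through the decomposition formula the same is true for $\Phi(t,\Omega)$. The step I expect to be the main obstacle is that $\Omega$ may have infinitely many components, which a priori threatens uniformity of the spectral gap; the Faber--Krahn step above is what resolves this, since shrinking components necessarily have diverging eigenvalues.
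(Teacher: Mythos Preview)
Your proposal is correct and follows essentially the same approach as the paper: decompose into connected components, use Faber--Krahn together with $|\Omega|<\infty$ to reduce the infinite-component case to finitely many relevant components, and then invoke a perturbation estimate on $\lambda_2$ under the bi-Lipschitz change of variables to preserve the strict gap. The only organizational difference is that you establish a uniform multiplicative bound $(1-C'|t|)\lambda_2(V)\le\lambda_2(\Phi(t,V))\le(1+C'|t|)\lambda_2(V)$ up front and apply it to all components at once, whereas the paper proves pointwise convergence $\lambda_2(\Phi(t,\Omega_j))\to\lambda_2(\Omega_j)$ for each $j$ and then separately invokes Faber--Krahn on the perturbed sets $\Phi(t,\Omega_j)$ (via uniform convergence of the Jacobian) to dispose of the small components.
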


\begin{proof}
	Let $\Omega=\cup_{j\geq 1}\Omega_j$ with disjoint, connected open sets $\Omega_j$.
	For all $t\in (-T, T)$, since $\Phi(t, \cdot)$ is a homeomorphism, the sets $\Phi(t, \Omega_j)$ are disjoint, connected open sets. Since $\Phi(t, \Omega_j)$ are connected there is a unique normalized, non-negative minimizer $u_{2, \Phi(t, \Omega_j)}$ for $\lambda_2(\Phi(t, \Omega_j))$. Taking $u_{2, \Phi(t, \Omega_j)}\circ \Phi(t, \cdot)$ and $u_{2, \Omega_j}\circ \Phi(t, \cdot)^{-1}$ as trial functions in the variational characterizations of $\lambda_2(\Omega_j)$ and $\lambda_2(\Phi(t, \Omega_j))$ one can prove that $\lambda_2(\Phi(t, \Omega_j))\to\lambda_2(\Omega_j)$ as $t\to 0$ for each $j$ (for details see the proof of Theorem~\ref{thm: variation}).

	If $\Omega$ has only finitely many connected components, then this implies the assertion. Indeed, the uniqueness of a minimizer for $\lambda_2(\Phi(t, \Omega))$ is equivalent to there being a single $j$ for which the infimum over $\lambda_2(\Phi(t, \Omega_j))$ is achieved. If there is a unique minimizing $j_0$ at $t=0$, then, by the above convergence of $\lambda_2(\Phi(t, \Omega_j))$, the same $j_0$ is also minimizing for all $|t|$ sufficiently small.
	
	If $\Omega$ has infinitely many connected components, we need an additional argument to control its small components. The assumption on $\Phi$ implies that its Jacobian converges uniformly to $1$. Thus, $|\Phi(t, \Omega_j)|/|\Omega_j|\to 1$ uniformly in $j$. From this and the (not necessarily sharp) Faber--Krahn inequality (see, e.g.~\cite{MR2251558}) we conclude that there is a $T_0\in(0, T)$ such that for all $j$ and all $|t|\leq T_0$, 
	$$
	\lambda_2(\Phi(t, \Omega_j)) \geq C_d |\Phi(t, \Omega_j)|^{-2/d} \geq (C_d/2) |\Omega_j|^{-2/d} \, .
	$$	
	Thus, for the question whether $\lambda_2(\Phi(t, \Omega_j))$ is attained at a unique $j$ it suffices to consider~$j$ with $(C_d/2) |\Omega_j|^{-2/d} < (1/2) \lambda_2(\Omega)$. Since $\Omega$ has finite measure, this is a finite number, and the proof can be concluded as before.
\end{proof}

\begin{proof}[Proof of Theorem~\ref{thm: variation}]
	We abbreviate
	$$
	\Omega(t):=\Phi(t, \Omega)
	\qquad\text{and}\qquad
	\lambda(t):=\lambda_q(\Omega(t)) \, .
	$$
	According to our discussion in Section~\ref{sec: Prelminaries}, for $1\leq q<2$, the normalized, non-negative minimizers $u_{q, \Omega(t)}$ of $\lambda(t)$ are unique. The same is true for $q=2$, provided $|t|$ is small, by the assumption of the theorem and Lemma~\ref{uniquevar}. We abbreviate
	$$
	u_{t}:= u_{q, \Omega(t)} \, .
	$$
	Define also $v_t\colon \Omega \to \R$ by
	$$
	v_t := u_t \circ \Phi(t, \cdot) \, .
	$$
	Since $u_{t}$ is non-negative and normalized in $L^q(\Omega(t))$, $v_t$ is non-negative and satisfies
	\begin{equation*}
		\int_\Omega v_{t}^{q} J_t\, dx = 1
	\end{equation*}
	with
	$$
	J_t := |{\det(D_x\Phi(t, \cdot))}| \, .
	$$
	Here and in what follows we write $D_x \Phi(t, \cdot)$ for the Jacobian of the map $x \mapsto \Phi(t, x)$.
	Since $D_{x}\Phi(t, \cdot)$ is bounded, we have $v_t\in H^1_0(\Omega)$ and
	$$
	\lambda(t) = \int_{\Omega(t)} |\nabla u_t|^2\, dx = \int_\Omega \nabla v_t \cdot A_t \nabla v_t\, dx
	$$
	with
	$$
	A_t := J_t  (D_x\Phi(t, \cdot))^{-1} ((D_x\Phi(t, \cdot))^{-1})^\top \, .
	$$

	After these preparations, we now start with the main argument. Since $u_0\circ\Phi(t, \, \cdot\, )^{-1}\in H_0^1(\Omega(t))$ and $v_t\in H_0^1(\Omega)$, these functions can be taken as trial functions in the variational characterizations of $\lambda(t)$ and $\lambda(0)$, respectively, which implies that
	\begin{equation}
		\label{eq:trialfcns}
		\lambda(t) \leq \frac{\int_\Omega \nabla u_0\cdot A_t \nabla u_0\, dx}{\bigl( \int_\Omega u_0^q J_t \, dx \bigr)^{2/q}}
		\quad\text{and}\quad
		\lambda(0) \leq \frac{\int_\Omega |\nabla v_t|^2 \, dx}{\bigl( \int_\Omega v_t^q \, dx \bigr)^{2/q}} \, .
	\end{equation}
	It follows from $A_t\to \id$ and $J_t\to 1$ in $L^\infty$ that
	$$
	\frac{\int_\Omega \nabla u_0\cdot A_t \nabla u_0\, dx}{\bigl( \int_\Omega u_0^q J_t \, dx \bigr)^{2/q}} = (1+o(1)) \frac{\int_\Omega |\nabla u_0|^2\, dx}{\bigl( \int_\Omega u_0^q \, dx \bigr)^{2/q}} = (1+o(1)) \lambda(0)
	$$
	and
	$$
	\frac{\int_\Omega |\nabla v_t|^2 \, dx}{\bigl( \int_\Omega v_t^q \, dx \bigr)^{2/q}}
	= (1+o(1)) \frac{\int_\Omega \nabla v_t\cdot A_t \nabla v_t\, dx}{\bigl( \int_\Omega v_t^q J_t \, dx \bigr)^{2/q}} = (1+o(1)) \lambda(t) \, .
	$$
	Thus, we have shown that $\lambda(t)\leq (1+o(1))\lambda(0)$ and $\lambda(0)\leq (1+o(1))\lambda(t)$ and, therefore, $\lambda(t)\to \lambda(0)$. Moreover, we conclude that
\begin{equation*}
	\lim_{t\to 0} \|\nabla v_t\|_{L^2(\Omega)}^2 = \lambda(0) \quad \mbox{and} \quad \lim_{t\to 0} \|v_t\|_{L^q(\Omega)}^2 =1\, , 
\end{equation*}
and, therefore, by Lemma~\ref{lem: strong eigenfunction conv}, $v_t \to u_0$ in $H^1_0(\Omega)$.

With this information at hand, we return to~\eqref{eq:trialfcns}, which we rewrite as
$$
\lambda(t) \leq \frac{\lambda(0) + t n(t)}{( 1+ t d(t))^{2/q}}
\quad\text{and}\quad
\lambda(0) \leq \frac{\lambda(t) - t \tilde n(t)}{( 1- t \tilde d(t))^{2/q}} \, , 
$$
where we used the normalizations of $u_0$ and $v_t$ and set
$$
n(t) := \int_\Omega \nabla u_0\cdot \bigl( t^{-1}( A_t-\id)\bigr) \nabla u_0\, dx \, , 
\qquad
\tilde n(t) := \int_\Omega \nabla v_t \cdot \bigl( t^{-1}( A_t-\id)\bigr) \nabla v_t\, dx \, , 
$$
and
$$
d(t) := \int_\Omega u_0^q\ t^{-1}(J_t-1)\, dx \, , 
\qquad
\tilde d(t) := \int_\Omega v_t^q\ t^{-1}(J_t-1)\, dx \, .
$$
The assumption $D\Phi(t, \cdot) = \id + t D\dot\Phi + o(t)$ in $L^\infty(\R^d, \R^{d\times d})$ implies that
\begin{align*}
	t^{-1} (A_t-\id) & \to -D\dot\Phi - (D\dot\Phi)^\top + \nabla\cdot \dot\Phi =: \dot A_0 
	\qquad\text{in}\ L^\infty(\R^d, \R^{d\times d}) \, , \\
	t^{-1} (J_t - 1) & \to \nabla\cdot \dot\Phi =: \dot J_0
	\qquad\text{in}\ L^\infty(\R^d, \R) \, .
\end{align*}
(Of course, in the limit defining $\dot A_0$, $\nabla\cdot\dot\Phi$ is identified with $\nabla\cdot\dot\Phi$ times the identity matrix.) This, together with the fact that $v_t\to u_0$ in $H^1_0(\Omega)$ (and therefore also in $L^q(\Omega)$), implies that
$$
n(t) \to n_0 \quad\text{and}\quad \tilde n(t) \to n_0 \, , 
\qquad\text{where}\ n_0 := \int_\Omega \nabla u_0\cdot \dot A_0 \nabla u_0\, dx \, , 
$$
and
$$
d(t) \to d_0 \quad \text{and}\quad \tilde d(t)\to d_0 \, , 
\qquad\text{where}\ d_0 :=\int_\Omega u_0^q\ \dot J_0 \, dx \, .
$$
Thus, we have shown that
\begin{align*}
	\lambda(t) & \leq \frac{\lambda(0) + t n_0 + o(t)}{( 1+ td_0 +o(t))^{2/q}} = \lambda(0) + t \biggl( n_0 - \frac 2q \lambda(0) d_0 \biggr) + o(t)  \, , \\
	\lambda(0) & \leq \frac{\lambda(t) - t n_0 + o(t)}{( 1- t d_0 + o(t))^{2/q}} = \lambda(t) - t \biggl( n_0 - \frac{2}{q} \lambda(t) d_0 \biggr) + o(t) \\
	& = \lambda(t) - t \biggl( n_0 - \frac{2}{q} \lambda(0) d_0 \biggr) + o(t) \, .
\end{align*}
We conclude that
$$
\frac{\lambda(t) - \lambda(0)}{t} = n_0 - \frac 2q \lambda(0) d_0 + o(1) \, , 
$$
that is, $\lambda$ is differentiable at $0$ with derivative given by~\eqref{eq: hadamards formula0}.

Assume now that $\Omega$ has Lipschitz boundary. In order to bring the derivative into the form~\eqref{eq: hadamards formula} we note that, by elliptic regularity (Lemma \ref{ellipticreg}), $u_0$ is smooth in $\Omega$. This, together with the existence of boundary values of $\nabla u_0$ discussed in Section~\ref{sec: Prelminaries}, implies
\begin{align}\label{eq:niceform}
	d_0 & = - q \int_\Omega u_0^{q-1} \dot\Phi \cdot \nabla u_0 \, dx = \frac{q}{\lambda(0)} \int_\Omega (\Delta u_0) \dot\Phi \cdot \nabla u_0 \, dx \notag \\
	& = - \frac{q}{\lambda(0)} \int_\Omega \nabla u_0 \cdot\nabla \bigl(\dot\Phi \cdot \nabla u_0 \bigr) dx + \frac{q}{\lambda(0)} \int_{\partial\Omega} \nu\cdot\nabla u_0 \dot \Phi\cdot \nabla u_0 \, d\mathcal H^{d-1}(x) \, .
\end{align}
For the first term on the right side we use
$$
\nabla u_0 \cdot\nabla \bigl(\dot\Phi\cdot\nabla u_0 \bigr) = \frac12 \Bigl( \nabla u_0 \cdot \bigl( D\dot\Phi + (D\dot\Phi)^\top\bigr) \nabla u_0 + \dot\Phi \cdot \nabla (|\nabla u_0|^2) \Bigr)
$$
and obtain, integrating by parts, 
$$
\int_\Omega \nabla u_0 \cdot\nabla \bigl(\dot\Phi \cdot \nabla u_0 \bigr) dx 
= - \frac12 \biggl( n_0 - \int_{\partial\Omega} |\nabla u_0|^2 \nu\cdot\dot\Phi\, d\mathcal H^{d-1}(x) \biggr)\,.
$$
Inserting this into~\eqref{eq:niceform} and using $\nabla u_0 = (\partial u_0/\partial\nu) \nu$ on $\partial\Omega$, we obtain
\begin{align*}
	d_0 & = \frac{q}{2\lambda(0)} \biggl( n_0 - \int_{\partial\Omega}\Bigl( \frac{\partial u_0}{\partial\nu} \Bigr)^2 \nu\cdot\dot\Phi \, d\mathcal H^{d-1}(x) \biggr) + \frac{q}{\lambda(0)} \int_{\partial\Omega} \Bigl( \frac{\partial u_0}{\partial\nu} \Bigr)^2 \nu\cdot\dot\Phi\, d\mathcal H^{d-1}(x) \\
	& = \frac{q}{2\lambda(0)} \, n_0 + \frac{q}{2\lambda(0)} \int_{\partial\Omega} \Bigl( \frac{\partial u_0}{\partial\nu} \Bigr)^2 \nu\cdot\dot\Phi\, d\mathcal H^{d-1}(x) \, .
\end{align*}
This implies the form~\eqref{eq: hadamards formula} of the derivative and concludes the proof of Theorem~\ref{thm: variation}.
\end{proof}


\section{Approximation of the Minkowski sum for \texorpdfstring{$C^1$}{C1} sets}
\label{sec: Geometry}

The aim of this section is to prove Theorem~\ref{thm: Hadamard Minkowski sum}, that is, for $\Omega\subset \R^d$ open, bounded and connected with $C^1$ boundary we wish to show that
\begin{equation*}
   \lim_{t \to 0^\limplus} \frac{\lambda_q(\Omega + tB)-\lambda_q(\Omega)}{t} = -\int_{\partial \Omega} \Bigl(\frac{\partial u_{q, \Omega}}{\partial \nu}\Bigr)^2\, d\Haus^{d-1}(x)\, .
 \end{equation*}
To achieve this we shall argue that the Minkowski sum $\Omega + tB$ can be approximated both from the interior and exterior by the image of $\Omega$ under a diffeomorphism. This, in turn, will allow us to apply the Hadamard variational formula in Theorem~\ref{thm: variation}, which a priori does not cover the variation induced by taking the Minkowski sum.

Define the signed distance function $\delta_\Omega$ by
\begin{equation*}
  \delta_\Omega(x) := \dist(x, \Omega)-\dist(x, \Omega^c)\, .
\end{equation*}
Here, we use the convention that $\delta_\Omega$ is negative in $\Omega$ and positive in $\Omega^c$. Recall that for any $\Omega \subset \R^d$ it holds that $|\nabla \delta_\Omega(x)|\leq 1$ for almost every $x\in \R^d$. Moreover, $\Omega+ tB$ is the sub-level set $\{x\in \R^d: \delta_\Omega(x)<t\}$. If $\nabla\delta_\Omega$ is Lipschitz in a neighborhood of $\partial\Omega$, the flow map associated with this vector field is a bijection from this neighborhood onto its image. As such, if $\delta_\Omega$ is sufficiently regular in a neighborhood of $\partial\Omega$ to allow for application of Hadamard's variational formula with the associated flow map, the statement of Theorem~\ref{thm: semi-linear eigenvalue} would follow in a straightforward manner. To handle boundaries of low regularity we shall follow the same idea but in combination with a mollification argument.

As seen in the previous section, a Hadamard variational formula is not so much dependent on the regularity of $\Omega$ as the regularity of the map $\Phi \in C^1((-T, T); W^{1, \infty}(\R^d; \R^d))$. Indeed the Lipschitz assumption in Theorem~\ref{thm: variation} is only used to justify the use of Green's identities and to make sense of the normal derivative of the minimizer as an element of $L^2(\partial\Omega)$. However, when it comes to the variational formula in Theorem~\ref{thm: Hadamard Minkowski sum} the regularity of the perturbation is intimately connected with the regularity of the underlying set $\Omega$.

 \subsection{Construction of approximate mapping}

 For an open set $\Omega \subset \R^d$, $\eps_0>0, \eta_0 >0$, we define $\Phi_\Omega\colon (-1, 1)\times \R^d \to \R^d$ by
 \begin{align*}
  \Phi_\Omega(t, x) 
  :=
  \Phi^{\eps_0, \eta_0}_\Omega(t, x) 
  &: = 
  x+ t \eps_0^{-d}\int_{\R^d} \varphi\Bigl(\frac{|y|}{\eps_0}\Bigr)\psi\Bigl(\frac{\delta_\Omega(x-y)}{\eta_0}\Bigr)\nabla\delta_\Omega(x-y)\, dy\\
  &= 
  x+ t \eps_0^{-d}\int_{\R^d} \varphi\Bigl(\frac{|x-y|}{\eps_0}\Bigr)\psi\Bigl(\frac{\delta_\Omega(y)}{\eta_0}\Bigr)\nabla\delta_\Omega(y)\, dy\\
  &=: x+ tX(x)\, , 
 \end{align*}
 where $\varphi \in C^\infty([0, 1])$ is non-increasing with $\varphi(1)=0$, $\varphi'\in C_0^\infty((0, 1))$, and satisfies $|\S^{d-1}|\int_0^1 \varphi(y)y^{d-1}\, dy =1$, while $\psi\in C_0^\infty((-1, 1))$ satisfies $0\leq \psi\leq 1$ and $\psi \equiv 1$ in $[-1/2, 1/2]$.

The key observation of this section is the following geometric result, which might be of independent interest.
\begin{proposition}\label{prop: set inclusion}
  Let $\Omega \subset \R^d$ be an open set and define $\Phi_\Omega = \Phi_\Omega^{\eps_0, \eta_0}, X$ as above. Then
  \begin{enumerate}[label=(\alph*)]
    \item for any $\eps_0>0$, if $|t|$ is sufficiently small, then for any $\eta_0>0$ the map $\Phi_\Omega(t, \cdot)$ is a diffeomorphism of $\R^d$ onto itself.

    \item for any $\eps_0, \eta_0 >0$ and $t\in [0, 1)$, 
  \begin{equation*}
    \Phi_\Omega(t, \Omega) \subseteq \Omega + tB\, .
  \end{equation*}

  \item\label{it: approx inclusion} if $\Omega$ is bounded, has $C^{1}$-regular boundary, and $\delta, \eta_0>0$, then there is an $\eps_0>0$ small enough so that for all sufficiently small $t>0$, 
  \begin{equation*}
    \Phi_\Omega((1+\delta)t, \Omega)\supset \Omega + tB\, .
  \end{equation*}  

  \item\label{it: normal convergence} if $\Omega$ is bounded, has $C^{1}$-regular boundary, and $\delta, \eta_0>0$, then
  \begin{equation*}
    \|\dot\Phi_\Omega -\nu\|_{L^\infty(\partial\Omega)} = \|X-\nu\|_{L^\infty(\partial\Omega)}= o_{\eps_0 \to 0}(1)\, , 
  \end{equation*}
  where $\dot\Phi_\Omega = \partial_t\Phi_\Omega|_{t=0}$ and $\nu$ denotes the outward pointing unit normal field on $\partial\Omega$.
  \end{enumerate} 
\end{proposition}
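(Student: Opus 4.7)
The plan is to handle parts (a), (b), (d) as direct consequences of the construction and the normalization of the kernels, and to treat part (c) as the main geometric content, where the $C^1$-regularity of $\partial\Omega$ enters through part (d) and a topological argument.

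For part (a), $X$ is smooth with $\|X\|_{L^\infty(\R^d)} \leq 1$ (see below) and $\|DX\|_{L^\infty(\R^d)} \leq C(\eps_0)$, since in the derivative the gradient falls on the smooth mollifier $\eps_0^{-d}\varphi(|\cdot|/\eps_0)$. For $|t| < 1/(2C(\eps_0))$, the equation $y=x+tX(x)$ rearranges to $x = y - tX(x)$, a uniform contraction in $x$ for each fixed $y$, so $\Phi_\Omega(t,\cdot)$ is a bijection, which combined with invertibility of $I+tDX$ makes it a $C^\infty$-diffeomorphism. For part (b), the pointwise bound $|X(x)|\leq 1$ follows from $|\psi|\leq 1$, $|\nabla\delta_\Omega|\leq 1$ a.e., and the normalization $|\S^{d-1}|\int_0^1\varphi(r)r^{d-1}\,dr=1$; the inclusion then follows from $|\Phi_\Omega(t,x)-x|\leq t$ for $x\in\Omega$.

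For part (d), the $C^1$-regularity of $\partial\Omega$ yields a tubular neighborhood $U=\{|\delta_\Omega|<\rho\}$ on which $\delta_\Omega\in C^1$ and $\nabla\delta_\Omega$ is continuous with $\nabla\delta_\Omega|_{\partial\Omega}=\nu$. For $\eps_0<\min(\rho,\eta_0/2)$ and $x\in\partial\Omega$, one has $|\delta_\Omega(y)|<\eta_0/2$ for $|y-x|<\eps_0$, so $\psi(\delta_\Omega(y)/\eta_0)=1$ in the defining integral and $X(x)$ reduces to the standard mollification of $\nabla\delta_\Omega$ on scale $\eps_0$. Uniform continuity of $\nabla\delta_\Omega$ on a neighborhood of the compact set $\partial\Omega$ then yields $\|X-\nu\|_{L^\infty(\partial\Omega)}=o_{\eps_0\to 0}(1)$.

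Part (c) is where the main difficulty lies: obtaining an exterior approximation of the Minkowski sum from a map not tailored to it. Setting $F:=\Phi_\Omega((1+\delta)t,\cdot)$, the first step is to prove $F(\partial\Omega)\cap\overline{\Omega+tB}=\emptyset$. For $x\in\partial\Omega$ and $t$ small, $F(x)=x+(1+\delta)tX(x)$ lies in the tubular neighborhood above, and a first-order Taylor expansion of $\delta_\Omega$ at $x$ using $\delta_\Omega(x)=0$ and $\nabla\delta_\Omega(x)=\nu(x)$ gives
\begin{equation*}
\dist(F(x),\Omega) = \delta_\Omega(F(x)) = (1+\delta)t\,\nu(x)\cdot X(x) + o(t) = (1+\delta)t\bigl(1+o_{\eps_0\to 0}(1)\bigr) + o_t(t)\, ,
\end{equation*}
where the factor $\nu\cdot X$ is controlled by part (d). Choosing $\eps_0$ small first (depending on $\delta$) and then $t$ small yields $\dist(F(x),\Omega)\geq (1+\delta/2)t>t$ uniformly for $x\in\partial\Omega$. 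To conclude, note $F$ is a diffeomorphism of $\R^d$ by (a), so $F(\Omega)$ is open with $\partial F(\Omega)=F(\partial\Omega)$. The set $A:=F(\Omega)\cap(\Omega+tB)$ is open; the disjointness just proved shows $A$ is also relatively closed in $\Omega+tB$, because any point of $\overline{F(\Omega)}\cap(\Omega+tB)$ that is not in $F(\Omega)$ would lie in $F(\partial\Omega)$, which is forbidden. Since $\Omega+tB$ is the continuous image of the connected product $\Omega\times tB$, hence connected, and $A\neq\emptyset$ (for small $t$ the points of $\Omega$ at distance $>\eps_0+\eta_0$ from $\partial\Omega$ are fixed by $F$ and lie in both $F(\Omega)$ and $\Omega+tB$), connectedness forces $A=\Omega+tB$, which is the desired inclusion. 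The subtle ingredient throughout is that the normal approximation in part (d), with the quantitative order of limits $\delta$ given, then $\eps_0$, then $t$, matches exactly what the Taylor estimate requires.
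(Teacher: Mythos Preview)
Your treatment of (a) and (b) is correct and essentially what the paper does. The gap is the assertion in (d) that for a bounded $\Omega$ with $C^{1}$ boundary the signed distance $\delta_\Omega$ is $C^{1}$ on a tubular neighbourhood $\{|\delta_\Omega|<\rho\}$. This is false in general: the standard result (e.g.\ Gilbarg--Trudinger, Lemma~14.16) requires $\partial\Omega\in C^{k}$ with $k\geq 2$; for merely $C^{1}$ boundaries the nearest-point projection need not be single-valued in any neighbourhood, and $\nabla\delta_\Omega$ need not extend continuously off $\partial\Omega$. You invoke this regularity twice: in (d) to get uniform continuity of $\nabla\delta_\Omega$ near $\partial\Omega$, and in (c) to justify the Taylor expansion $\delta_\Omega(x+h)=\nu(x)\cdot h+o(|h|)$ uniformly for $x\in\partial\Omega$. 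Both conclusions happen to be true, but not for the reason you give; the first follows from the a.e.\ identity $\nabla\delta_\Omega(y)=\nu(p(y))$ with $p(y)$ a nearest boundary point, together with $|p(y)-x|\leq 2|y-x|$ and the uniform continuity of $\nu$ on the compact $C^{1}$ boundary, while the second follows from a cone-confinement of the graph in local coordinates. As written, however, your proof rests on an unavailable regularity statement.

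The paper sidesteps this issue by never differentiating $\delta_\Omega$ pointwise. Fixing a boundary point and rotating so that $\nu=e_d$, it integrates by parts in the defining integral for $\nu\cdot X(0)$ to transfer the derivative from $\delta_\Omega$ onto the radial mollifier $\varphi$, obtaining $-\eps_0^{-d-1}\int \varphi'(|y|/\eps_0)(y_d/|y|)\,\delta_\Omega(y)\,dy$. This is then estimated directly from the inclusion of $\partial\Omega\cap B_{\eps_0}$ in a double cone of opening $\kappa=o_{\eps_0\to 0}(1)$, which controls $\delta_\Omega$ itself (not its gradient) on the three regions $\mathcal C^{\pm},\mathcal C^{0}$. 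This yields simultaneously $\nu\cdot X\geq 1-O(\kappa)$ on $\partial\Omega$ (hence (d), combined with $|X|\leq 1$) and the lower bound on $\delta_\Omega(\Phi((1+\delta)t,x))$ needed for (c), using only the Lipschitz regularity of $\delta_\Omega$. Your topological endgame in (c) is the same idea as the paper's, though note that $\Omega$ is not assumed connected in the proposition, so the clopen argument should be run in each connected component of $\Omega+tB$.
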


\begin{remark}
  The third and fourth part of the proposition do not extend to general Lipschitz sets. In fact,~\ref{it: approx inclusion} fails for planar polygons. Indeed, it is an easy computation to see that if $\Omega \subset \R^2$ is a polygon and $0 \in \partial \Omega$ is a corner of interior angle $\theta$, then $|\Phi_\Omega(t, 0)| = t \sqrt{2-2\cos(\theta)}/2$ for $t>0$ as long as the $\eps_0$ ball around $0$ contains no other corner of~$\Omega$. Thus, if $\theta \neq \pi$ (a ``flat corner'') this point is mapped to the interior of $\Omega + t'B$ unless $t > c(\theta) t'$ with $c(\theta)>1$. In particular, this proves that we cannot take $\delta$ arbitrarily close to $0$. Similarly, it is proved by Hofmann, Mitrea, and Taylor~\cite{hofmann_geometric_2007} that, if we define, for $\Omega \subset \R^d$ open and bounded, 
  \begin{equation*}
    \rho(\Omega) :=\inf\{\|X-\nu\|_{L^\infty(\partial\Omega)}: X\in C^0(\partial\Omega; \R^d), |X|=1 \mbox{ on }\partial\Omega\}\, , 
  \end{equation*}
  then
  \begin{align*}
    \rho(\Omega) = 0 \quad &\Longleftrightarrow \quad \partial \Omega \mbox{ is } C^1\, , \\
    \rho(\Omega) < \sqrt{2} \quad &\Longleftrightarrow \quad \partial \Omega \mbox{ is Lipschitz}\, .\\
  \end{align*}
  In particular, this implies that the validity of~\ref{it: normal convergence} for $X$ that is merely continuous on $\partial\Omega$ implies that $\partial\Omega$ is $C^1$. We note that the notion of Lipschitz sets used here (and frequently in the mathematics literature) is in~\cite{hofmann_geometric_2007} referred to as \emph{strongly} Lipschitz to distinguish it from the somewhat less commonly occurring notion of weakly Lipschitz sets.
\end{remark}

\begin{proof}[Proof of Proposition~\ref{prop: set inclusion}]
  For notational convenience, throughout the proof we drop the subscript $\Omega$ for the mapping $\Phi$.

  To prove the first claim, it suffices to prove that $\Phi$ is injective. We argue by contradiction. Fix $t$ and let $x_1, x_2$ be such that $x_1 \neq x_2$ and $\Phi(t, x_1)=\Phi(t, x_2)$. Then, by the definition of~$\Phi$ and the fundamental theorem of calculus, 
  \begin{align*}
    |x_1-x_2| 
    &= 
    |t| \eps_0^{-d}\biggl|\int_{\R^d}\Bigl(\varphi\Bigl(\frac{|x_1-y|}{\eps_0}\Bigr)-\varphi\Bigl(\frac{|x_2-y|}{\eps_0}\Bigr)\Bigr)\psi\Bigl(\frac{\delta_\Omega(y)}{\eta_0}\Bigr)\nabla\delta_\Omega(y)\, dy\biggr|\\
    &= 
    |t| \eps_0^{-d}\biggl|\int_{\R^d}\biggl(\int_0^1\varphi'\Bigl(\frac{|\rho x_1+(1-\rho)x_2-y|}{\eps_0}\Bigr)\frac{x_1-x_2}{\eps_0}\, d\rho\biggr)\psi\Bigl(\frac{\delta_\Omega(y)}{\eta_0}\Bigr)\nabla\delta_\Omega(y)\, dy\biggr|\\
    &\leq 
    |t| \eps_0^{-1}C_d\|\varphi'\|_{\infty}|x_1-x_2|\, , 
   \end{align*}
   where we used $|\psi|\leq 1$ and $|\nabla \delta_\Omega|\leq 1$ almost everywhere. Clearly, this is a contradiction if $|t|$ is sufficiently small.

   To prove the second claim, it suffices to observe that $|X(x)|\leq 1$:
   \begin{equation*}
    |X(x)| = 
    \eps_0^{-d}\biggl|\int_{\R^d}\varphi\Bigl(\frac{|x-y|}{\eps_0}\Bigr)\psi\Bigl(\frac{\delta_\Omega(y)}{\eta_0}\Bigr)\nabla\delta_\Omega(y)\, dy\biggr|
    \leq
    \eps_0^{-d}\int_{\R^d}\varphi\Bigl(\frac{|x-y|}{\eps_0}\Bigr)\, dy=1\, , 
   \end{equation*}
   since $|\psi|\leq 1$, $|\nabla \delta_\Omega|\leq 1$ almost everywhere and by the choice of normalization of $\varphi$.

   To prove the remaining statements, we argue as follows. Fix $\delta, \eta_0 >0$ and a point $x \in \partial\Omega$ such that the outward pointing unit normal to $\partial\Omega$ at $x$ is $(0, \ldots, 0, 1)$. Without loss of generality, we may assume that $x=0$. Provided $\eps_0>0$ is small enough (depending only on $\Omega$) the set $\partial\Omega \cap B_{2\eps_0}(0)$ can be parametrized as the graph of a function $f\in C^1(\R^{d-1})$, that is, 
   \begin{equation*}
    \partial\Omega\cap B_{2\eps_0}(0) = \{ (x', x_d) \in B_{2\eps_0}(0) : x_d=f(x')\}\, .
   \end{equation*}
   By the Heine--Cantor theorem and the compactness of $\partial\Omega$, there is a modulus of continuity $\omega\colon (0, \infty) \to \R$ (non-decreasing with $\lim_{\delta \to 0}\omega(\delta)=0$) independent of the choice of boundary point such that
   \begin{equation*}
    |\nabla f(x')-\nabla f(y')|\leq \omega(|x'-y'|)\quad \mbox{for all } x', y' \in \R^{d-1}\, .
   \end{equation*}
   Therefore, for any $\kappa>0$ there is an $\eps_0>0$ small enough (depending only on $\omega$) for which
   \begin{equation*}
    \partial\Omega \cap B_{\eps_0}(0)\subset \{x \in \R^d: |x_d| \leq \kappa |x|\} =: \mathcal{C}^0 \, .
   \end{equation*}
	Define the sets
   \begin{equation*}
    \mathcal{C}^+ := \{x \in \R^d: x_d > \kappa |x|\}\quad \mbox{and}\quad \mathcal{C}^- := \{x \in \R^d: x_d <- \kappa |x|\}\, .
   \end{equation*}
   Note that $\mathcal{C}^+\cap B_{\eps_0}(0)\subset \Omega^c$ and  $\mathcal{C}^-\cap B_{\eps_0}(0)\subset \Omega$.

   Assume that $\eta_0 > 2\eps_0$. Then at our boundary point $0$ we find
   \begin{align*}
    (0, \ldots, 0, 1) \cdot \Phi((1+\delta)t, 0))
    &\geq
    (1+\delta)t (0, \ldots, 0, 1)\cdot \eps_0^{-d}\int_{\R^d} \varphi\Bigl(\frac{|y|}{\eps_0}\Bigr)\nabla \delta_\Omega(y)\, dy\\
    &=
    -(1+\delta)t \eps_0^{-d-1}\int_{\R^d} \varphi'\Bigl(\frac{|y|}{\eps_0}\Bigr) \frac{y_d}{|y|}\delta_\Omega(y)\, dy\, .
   \end{align*}
   We claim that 
   \begin{equation}\label{eq: distance integral estimate}
    \eps_0^{-d-1}\int_{\R^d} \varphi'\Bigl(\frac{|y|}{\eps_0}\Bigr) \frac{y_d}{|y|}\delta_\Omega(y)\, dy \leq -1+ O(\kappa)\, , 
   \end{equation}
   where the implicit constant depends only on $d$ and the choice of $\varphi$.

   With the estimate~\eqref{eq: distance integral estimate} in hand, we see that $\Phi((1+\delta)t, \cdot)$ maps the origin into the set $A:=B_{(1+\delta)t}(0)\cap \{x\in \R^d: x_d \geq t(1+\delta)(1+ O(\kappa))$. If $\kappa$ is sufficiently small, we also have $\dist(A, \partial\Omega)>t$. Indeed, $\{x\in B_{(1+\delta)t}(0): \dist(x, \partial \Omega)\leq t\} \subseteq \{x=(x', x_d) \in B_{(1+\delta)t}(0): |x_d|\leq \sqrt{1+\kappa^2}t + \kappa |x'|\}$, which is disjoint from $A$, provided $\kappa$ is chosen sufficiently small. Thus, we have proved that $\Phi((1+\delta)t, 0)\in (\Omega+ tB)^c$. As the choice of boundary point was arbitrary, we conclude that $\Phi((1+\delta)t, \partial\Omega)\subset (\Omega+ tB)^c$. By the continuity of $\Phi$ and the fact that $\Phi$ acts as the identity in the bulk of $\Omega$, we have the desired inclusion $\Phi((1+\delta)t, \Omega)\supset \Omega +tB$. Similarly, the bound~\eqref{eq: distance integral estimate} together with $|X(x)|\leq 1$ implies that $1\geq \nu(x)\cdot X(x)\geq 1+O(\kappa)$ uniformly for all $x\in \partial\Omega$ and, therefore, $\|X-\nu\|_{L^\infty(\partial\Omega)}= O(\kappa)$ proving the final claim of the proposition.

   What remains to complete the proof of Proposition~\ref{prop: set inclusion} is to prove~\eqref{eq: distance integral estimate}. By splitting the integral, we can estimate
   \begin{align*}
    \int_{\R^d} \varphi'\Bigl(&\frac{|y|}{\eps_0}\Bigr) \frac{y_d}{|y|}\delta_\Omega(y)\, dy\\
    &=
    -\int_{\mathcal{C}^+} \Bigl|\varphi'\Bigl(\frac{|y|}{\eps_0}\Bigr)\Bigr| \frac{|y_d|}{|y|}\dist(y, \partial\Omega)\, dy
    -
    \int_{\mathcal{C}^-} \Bigl|\varphi'\Bigl(\frac{|y|}{\eps_0}\Bigr)\Bigr| \frac{|y_d|}{|y|}\dist(y, \partial\Omega)\, dy\\
    &\quad
    +
    \int_{\mathcal{C}^0} \varphi'\Bigl(\frac{|y|}{\eps_0}\Bigr) \frac{y_d}{|y|}\delta_\Omega(y)\, dy\\
    &\leq
    -\int_{\mathcal{C}^+} \Bigl|\varphi'\Bigl(\frac{|y|}{\eps_0}\Bigr)\Bigr| \frac{|y_d|}{|y|}\dist(y, \partial\mathcal{C}^+)\, dy
    -
    \int_{\mathcal{C}^-} \Bigl|\varphi'\Bigl(\frac{|y|}{\eps_0}\Bigr)\Bigr| \frac{|y_d|}{|y|}\dist(y, \partial\mathcal{C}^-)\, dy\\
    &\quad
    +
    O(\eps_0^{d+1}\kappa^3)\\
    &=
    -2\int_{\mathcal{C}^+} \Bigl|\varphi'\Bigl(\frac{|y|}{\eps_0}\Bigr)\Bigr| \frac{|y_d|}{|y|}\dist(y, \partial\mathcal{C}^+)\, dy
    +
    O(\eps_0^{d+1}\kappa^3)\, , 
   \end{align*}
   where we used $\varphi'\leq 0$, the definition of $\delta_\Omega$, $|\mathcal{C}^0\cap B_{\eps_0}(0)|\lesssim \kappa \eps_0^{d}$, and the fact that for $y \in \mathcal{C^0}\cap B_{\eps_0}(0)$ it holds that $\dist(y, \partial\Omega) \leq \kappa \eps_0$ and $|y_d|/|y|\leq \kappa$.

   By writing the remaining integral in spherical coordinates, we find
   \begin{align*}
  \int_{\mathcal{C}^+}&|\varphi'(|y|/\eps_0)|\frac{|y_d|}{|y|}\dist(y, \partial\mathcal{C}^+)\, dy\\
  &=
  |\S^{d-2}| \int_0^{\eps_0} \int_{0}^{\theta_0} |\varphi'(\eta/\eps_0)| \cos(\theta)\eta (\sin(\theta), \cos(\theta))\cdot (- \kappa, \sqrt{1-\kappa^2})\sin(\theta)^{d-2}\eta^{d-1}\, d\theta d\eta\\
  &=
  |\S^{d-2}| \int_0^{\eps_0} \int_{0}^{\pi/2} |\varphi'(\eta/\eps_0)| \cos(\theta)^2\sin(\theta)^{d-2}\eta^{d}\, d\theta d\eta\\
  &\quad
  - 
  |\S^{d-2}| \int_0^{\eps_0} \int_{\theta_0}^{\pi/2} |\varphi'(\eta/\eps_0)| \cos(\theta)^2\sin(\theta)^{d-2}\eta^{d}\, d\theta d\eta\\
  &\quad
  + |\S^{d-2}|( \sqrt{1-\kappa^2}-1) \int_0^{\eps_0} \int_{0}^{\theta_0} |\varphi'(\eta/\eps_0)| \cos(\theta)^2\sin(\theta)^{d-2}\eta^{d}\, d\theta d\eta\\
  &\quad
  -
  |\S^{d-2}| \kappa\int_0^{\eps_0} \int_{0}^{\theta_0} |\varphi'(\eta/\eps_0)| \cos(\theta)\sin(\theta)^{d-1}\eta^{d}\, d\theta d\eta\\
  &=
  \frac{|\S^{d-1}|}{2d}\int_0^{\eps_0} |\varphi'(\eta/\eps_0)|\eta^d \, d\eta + O(\kappa\eps_0^{d+1})\\
  &=
  \frac{1}{2}\eps_0^{d+1} + O(\kappa\eps_0^{d+1})\, , 
\end{align*}
where we set
\begin{equation*}
  \theta_0 := \arccos(\kappa)\, .
\end{equation*}
This proves the estimate~\eqref{eq: distance integral estimate} and thus completes the proof of Proposition~\ref{prop: set inclusion}.
\end{proof}

\subsection{Proof of Theorem~\ref{thm: Hadamard Minkowski sum}}
By Proposition~\ref{prop: set inclusion}, for any $\delta, \eta_0>0$ there is an $\eps_0>0$ so that for $t>0$ small enough $\Phi_\Omega(t, \Omega) \subseteq \Omega + tB \subseteq \Phi_\Omega((1+\delta)t, \Omega)$. By the monotonicity of $\lambda_q$ under set inclusions, we conclude that
\begin{align*}
  \frac{\lambda_q(\Phi_\Omega(t, \Omega))-\lambda_q(\Omega)}{t} 
  \leq 
  \frac{\lambda_q(\Omega + tB)-\lambda_q(\Omega)}{t}
  \leq 
  \frac{\lambda_q(\Phi_\Omega((1+\delta)t, \Omega))-\lambda_q(\Omega)}{t}\, .
\end{align*}

By Theorem~\ref{thm: variation}, 
\begin{align*}
  \lim_{t\to 0^\limplus}\frac{\lambda_q(\Phi_\Omega(t, \Omega))-\lambda_q(\Omega)}{t}  &=  -\int_{\partial \Omega} \Bigl( \frac{\partial u_{q, \Omega}}{\partial \nu}\Bigr)^2 X\cdot \nu \, d\Haus^{d-1}(x)\, , \\
  \lim_{t\to 0^\limplus}\frac{\lambda_q(\Phi_\Omega((1+\delta)t, \Omega))-\lambda_q(\Omega)}{t}  &=  -(1+\delta)\int_{\partial \Omega} \Bigl( \frac{\partial u_{q, \Omega}}{\partial \nu}\Bigr)^2 X\cdot \nu \, d\Haus^{d-1}(x)\, .
 \end{align*} 

 By sending $\eps_0\to 0$, dominated convergence along with~\ref{it: normal convergence} of Proposition~\ref{prop: set inclusion}, we deduce
 \begin{align*}
   -\int_{\partial \Omega} \!\Bigl( \frac{\partial u_{q, \Omega}}{\partial \nu}\Bigr)^2 d\Haus^{d-1}(x)
   &\leq \liminf_{t \to 0^\limplus}\frac{\lambda_q(\Omega + tB)-\lambda_q(\Omega)}{t}\\
   &\leq \limsup_{t \to 0^\limplus} \frac{\lambda_q(\Omega + tB)-\lambda_q(\Omega)}{t} \leq -(1+\delta)\int_{\partial \Omega} \!\Bigl( \frac{\partial u_{q, \Omega}}{\partial \nu}\Bigr)^2 d\Haus^{d-1}(x)\, .
 \end{align*}
 Consequently, since $\delta>0$ is arbitrary, this completes the proof of the theorem. \qed


\section{Proof of Theorem~\ref{thm: semi-linear eigenvalue}}

In this section, we prove our main result, Theorem~\ref{thm: semi-linear eigenvalue}. We will do this in several steps. In the first step, we argue that it suffices to prove the theorem for $\Omega$ open and connected. In the second step, we prove that, under the additional assumption that $\Omega$ has $C^1$ boundary, the inequality in Theorem~\ref{thm: semi-linear eigenvalue} follows by combining Theorems~\ref{thm: BM lambdaq} and~\ref{thm: Hadamard Minkowski sum}. Finally, in a third step, we show that the inequality for $\Omega$ with Lipschitz boundary can be deduced from the more regular case by a fairly standard approximation argument.


\subsection{Reduction to connected sets} 

In this subsection, we prove that, if $\Omega$ is an open set of finite measure and if $\Omega = \cup_{j\in J} \Omega_j$ with open, connected and pairwise disjoint $\Omega_j$, then inequality of Theorem~\ref{thm: semi-linear eigenvalue} for $\Omega$ follows from the result applied to each individual $\Omega_j$ separately. In particular, it suffices to prove Theorem~\ref{thm: semi-linear eigenvalue} under the additional assumption that $\Omega$ is connected.

\medskip

{\noindent \bf Case 1} ($q=2$){\bf:} Let $\Omega = \cup_{j\in J} \Omega_J$ as above and assume that the statement of the theorem with $q=2$ holds for $\Omega_j$ with $j\in J$. Without loss of generality we assume that $\lambda_2(\Omega_{j})\leq \lambda_2(\Omega_{j+1})$ for all $j$. 

If $\lambda_2(\Omega_1)<\lambda_2(\Omega_2)$, then $\lambda_2(\Omega)=\lambda_2(\Omega_1)$ and any associated eigenfunction $u_{2, \Omega}$ is an eigenfunction on $\Omega_1$ (extended by zero to $\Omega\setminus \Omega_1$). Therefore, the statement of the theorem for $\Omega$ follows immediately from the validity of the theorem for $\Omega_1$.

If $\lambda_2(\Omega_1)=\lambda_2(\Omega_m)$ for some maximal $m\geq 2$ (note that such an $m$ exists by the argument in the proof of Lemma \ref{uniquevar}), then $\lambda_2(\Omega)=\lambda_2(\Omega_1)=\lambda_2(\Omega_m)$ and any eigenfunction $u_{2, \Omega}$ is a linear combination of eigenfunctions on $\{\Omega_j\}_{j=1}^m$  extended by zero. That is, if $u_{2, \Omega}$ is an $L^2$-normalized eigenfunction associated to $\lambda_2(\Omega)$, then there are $\{a_j\}_{j=1}^m$ with $\sum_{j=1}^m a_j^2=1$ such that $u_{2, \Omega}= \sum_{j=1}^m a_j u_{2, \Omega_j}$, where, for each $j$, $u_{2, \Omega_j}$ is an $L^2$-normalized eigenfunction associated to $\lambda_2(\Omega_j)$. Since the $u_{2, \Omega_j}$ have disjoint support for different $j$, from the inequality of Theorem~\ref{thm: semi-linear eigenvalue} applied to each $\Omega_j$ separately we deduce that 
\begin{align*}
  \int_{\partial\Omega} \Bigl(\frac{\partial u_{2, \Omega}}{\partial \nu}\Bigr)^2 \, d\Haus^{d-1}(x) 
  &\geq 
  \sum_{j=1}^m a_j^2\int_{\partial\Omega_{j}} \Bigl(\frac{\partial u_{2, \Omega_j}}{\partial \nu}\Bigr)^2 \, d\Haus^{d-1}(x)\\
  &\geq
  2\sum_{j=1}^m a_j^2 \frac{\lambda_2(\Omega_j)^{3/2}}{\lambda_2(B)^{1/2}}\\
  &= 2 \frac{\lambda_2(\Omega)^{3/2}}{\lambda_2(B)^{1/2}}\, , 
\end{align*}
which is the desired inequality for $\Omega$.

\medskip

{\noindent \bf Case 2} ($1\leq q<2$){\bf:}

Fix $1\leq q <2$, let $\Omega = \cup_{j\in J} \Omega_j$ as above, and assume that the statement of the theorem holds for each of the sets $\Omega_j$ with $j\in J$. 

In this case, the normalized minimizers $u_{q, \Omega}, u_{q, \Omega_j}$ are all unique. Moreover, by~\eqref{eq: spin formula0} and~\eqref{eq: spin formula}, 
\begin{align*}
  \lambda_q(\Omega)^{-\frac{q}{2-q}}&=\sum_{j\in J}\lambda_q(\Omega_j)^{-\frac{q}{2-q}}\, , \ \mbox{and}\\
  u_{q, \Omega}(x) &= \sum_{j\in J}\Bigl(\frac{\lambda_q(\Omega)}{\lambda_q(\Omega_j)}\Bigr)^{\frac{1}{2-q}}u_{q, \Omega_j}(x) \, .
\end{align*} 

By the disjointness of the supports of the $u_{q, \Omega_j}$, we observe that
\begin{equation}\label{eq: subadditivity}
\begin{aligned}
  \int_{\partial\Omega} \Bigl( \frac{\partial u_{q, \Omega}}{\partial \nu}\Bigr)^2 \, d\Haus^{d-1}(x)
  &=
  \sum_{j\in J} \Bigl(\frac{\lambda_q(\Omega)}{\lambda_q(\Omega_j)}\Bigr)^{\frac{2}{2-q}}\int_{\partial\Omega_j} \Bigl(\frac{\partial u_{q, \Omega_j}}{\partial \nu}\Bigr)^2 \, d\Haus^{d-1}(x)\\
  &\geq
  \frac{\lambda_q(\Omega)^{\frac{2}{2-q}}}{\alpha_q \lambda_q(B)^{\alpha_q}} \sum_{j\in J} \Bigl(\lambda_q(\Omega_j)^{-\frac{q}{2-q}}\Bigr)^{-\frac{2-q}{q}(1+\alpha_q-\frac{2}{2-q})}\, .
\end{aligned}
\end{equation}
By the definition of $\alpha_q$ and since $1\leq q<2$, we see that
\begin{equation*}
  -\frac{2-q}{q}\Bigl(1+\alpha_q-\frac{2}{2-q}\Bigr) = 1- \frac{2-q}{2q+d(2-q)} \in \Bigl[1-\frac{1}{d+2}, 1\Bigr)\, .
\end{equation*}
Consequently,~\eqref{eq: subadditivity}, the subadditivity of $x \mapsto x^{\alpha}$ for $0<\alpha \leq 1$, and~\eqref{eq: spin formula0}, \eqref{eq: spin formula} imply that
\begin{align*}
  \int_{\partial\Omega} \Bigl( \frac{\partial u_{q, \Omega}}{\partial \nu}\Bigr)^2 \, d\Haus^{d-1}(x)
  &\geq
  \frac{\lambda_q(\Omega)^{\frac{2}{2-q}}}{\alpha_q \lambda_q(B)^{\alpha_q}} \sum_{j\in J} \Bigl(\lambda_q(\Omega_i)^{-\frac{q}{2-q}}\Bigr)^{-\frac{2-q}{q}(1+\alpha_q-\frac{2}{2-q})}\\
  &\geq
  \frac{\lambda_q(\Omega)^{\frac{2}{2-q}}}{\alpha_q \lambda_q(B)^{\alpha_q}} \biggl(\sum_{j\in J} \lambda_q(\Omega_i)^{-\frac{q}{2-q}}\biggr)^{-\frac{2-q}{q}(1+\alpha_q-\frac{2}{2-q})}\\
  &=
  \frac{\lambda_q(\Omega)^{1+\alpha_q}}{\alpha_q \lambda_q(B)^{\alpha_q}}\, , 
\end{align*}
which is the desired inequality.


\subsection{Proof of Theorem~\ref{thm: semi-linear eigenvalue} for \texorpdfstring{$C^1$}{C1} sets}

We are now ready to prove Theorem~\ref{thm: semi-linear eigenvalue} under the assumption that the boundary of $\Omega$ is $C^1$.

Recall that $B$ denotes the unit ball. Since 
\begin{equation*}
  \Omega + t B = (1+t)\Bigl[\Bigl(1- \frac{t}{1+t}\Bigr)\Omega + \frac{t}{1+t}B\Bigr] \, , 
\end{equation*}
the homogeneity of $\lambda_q$ and the Brunn--Minkowski inequality of Theorem~\ref{thm: BM lambdaq} imply that
\begin{align*}
  \lambda_q(\Omega+t B) &= (1+t)^{-1/\alpha_q}\Bigl[\lambda_q\Bigl(\Bigl(1- \frac{t}{1+t}\Bigr)\Omega + \frac{t}{1+t}B\Bigr)^{-\alpha_q}\Bigr]^{-1/\alpha_q}\\
  &\leq 
  (1+t)^{-1/\alpha_q}\Bigl[\Bigl(1- \frac{t}{1+t}\Bigr)\lambda_q(\Omega)^{-\alpha_q} + \frac{t}{1+t}\lambda_q(B)^{-\alpha_q}\Bigr]^{-1/\alpha_q}\\
  &= 
  \bigl(\lambda_q(\Omega)^{-\alpha_q} + t\lambda_q(B)^{-\alpha_q}\bigr)^{-1/\alpha_q}\, .
\end{align*}
When combined with Theorem~\ref{thm: Hadamard Minkowski sum}, we find
\begin{equation*}
\begin{aligned}
  -\int_{\partial \Omega} \Bigl( \frac{\partial u_{q, \Omega}}{\partial \nu}\Bigr)^2\, d\Haus^{d-1}(x) &=\lim_{t \to 0} \frac{\lambda_q(\Omega + tB)-\lambda_q(\Omega)}{t}\\
  &\leq 
  \lim_{t \to 0} \frac{\bigl(\lambda_q(\Omega)^{-\alpha_q} + t\lambda_q(B)^{-\alpha_q}\bigr)^{-1/\alpha_q}-\lambda_q(\Omega)}{t} \\
  &= - \frac{\lambda_q(\Omega)^{1+\alpha_q}}{\alpha_q\lambda_q(B)^{\alpha_q}}\, , 
\end{aligned}
\end{equation*}
which completes the proof of the inequality in Theorem~\ref{thm: semi-linear eigenvalue} under the assumption that $\Omega$ has $C^1$ boundary. 

If $\Omega$ is a ball of radius $r$, then $\Omega + tB$ is a ball of radius $r+t$. Therefore, by homogeneity of $\lambda_q$, equality holds for each $t$ in the above application of the Brunn--Minkowski inequality. Consequently, when $\Omega$ is a ball, equality holds in the inequality of Theorem~\ref{thm: semi-linear eigenvalue}.

There is another way to deduce that equality holds for balls. Namely, for any bounded open set $\Omega$ with Lipschitz boundary one has the Rellich--Pohozaev identity, 
$$
\int_{\partial\Omega} \Bigl(\frac{\partial u_{q, \Omega}}{\partial \nu}\Bigr)^2x \cdot \nu \, d\Haus^{d-1}(x) = \frac{\lambda_q(\Omega)}{\alpha_q} \, ;
$$
see, e.g., \cite[Proposition 2.9]{BrascoFranzina_Semilinear_overview_2020}. In particular, when $\Omega$ is a ball, centered at the origin, then $x\cdot\nu$ is equal to the radius of the ball and the Rellich--Pohozaev identity reduces to the equality case in Theorem~\ref{thm: semi-linear eigenvalue}.


\subsection{Approximation of Lipschitz sets from the inside by smooth sets}
\label{sec: approx Lipschitz}

Fix $1\leq q \leq 2$ and let $\Omega \subset \R^d$ be open bounded and connected with Lipschitz boundary.

Our goal is to prove the inequality in Theorem~\ref{thm: semi-linear eigenvalue} by approximating $\Omega$ from the inside by smooth sets and proving that the involved quantities converge under this approximation.

Let $\{\Omega_j\}_{j\geq 1}$ be a sequence of open sets with $C^\infty$-regular boundary such that:
\begin{enumerate}[label=(\roman*)]
  \item\label{it: Lip set approx inclusion} $\Omega_j \subset \Omega_{j+1}$ for all $j\geq 1$ and $\Omega = \cup_{j\geq 1} \Omega_j$.

  \item\label{it: Lip set approx homeo} There are homeomorphisms $B_j\colon \partial\Omega \to \partial\Omega_j$ such that $$\sup_{x\in \partial\Omega}|x-B_j(x)|= o_{j\to \infty}(1)$$ and for all $x\in \partial\Omega$ the $B_j(x)$ convergences to $x$ non-tangentially.

  \item\label{it: Lip set approx Jacobian} There is $\delta \in (0, 1)$ and functions $b_j\colon \partial\Omega \to (\delta, 1/\delta)$ such that, for all measurable $\omega \subset \partial\Omega$, 
  \begin{equation*}
     \int_\omega b_j(x)\, d\Haus^{d-1}(x) = \int_{B_j(\omega)}\, d\Haus^{d-1}(x)
   \end{equation*} 
   and $b_j \to 1$ pointwise almost everywhere and in $L^p(\partial\Omega)$, for all $1\leq p <\infty$, 

   \item\label{it: Lip set approx normal convergence} Let $\nu_{j}(x)$ denote the outward pointing unit normal to $\partial\Omega_j$ at $x\in \partial\Omega_j$. The function $\partial \Omega \ni x\mapsto \nu_{j}(B_j(x))$ converges to $\nu(x)$ pointwise almost everywhere and in $L^p(\partial\Omega)$, for all $1\leq p <\infty$. The corresponding statement holds also for the locally defined tangent vectors.
\end{enumerate}
The existence of a sequence $\{\Omega_j\}_{j\geq 1}$ satisfying~\ref{it: Lip set approx inclusion}--\ref{it: Lip set approx normal convergence} is the content of~\cite[Theorem 1.12]{verchota_layer_1984}

Combining~\ref{it: Lip set approx inclusion} and~\ref{it: Lip set approx homeo} we deduce that $\Omega_j \to \Omega$ with respect to the Hausdorff distance. By Lemma~\ref{lem: Hausdorff interior continuity}, it holds that $\lim_{j \to \infty}\lambda_q(\Omega_j) = \lambda_q(\Omega)$. Moreover, Lemma~\ref{lem: strong eigenfunction conv} implies that $u_{q, \Omega_j}$ converges to $u_{q, \Omega}$ strongly in $H_0^1(\Omega)$. 

By the results of Jerison--Kenig~\cite{JerisonKenig_BAMS81} and Verchota~\cite{verchota_layer_1984}, the non-tangential maximal function of $\nabla u_{q, \Omega}$ belongs to $L^2(\partial\Omega)$. Therefore, by the dominated convergence theorem together with properties~\ref{it: Lip set approx Jacobian} and~\ref{it: Lip set approx normal convergence}, 
\begin{equation}\label{eq: normal derivative on inner boundary}
  \int_{\partial\Omega} \Bigl( \frac{\partial u_{q, \Omega}}{\partial \nu}\Bigr)^2 \, d\Haus^{d-1}(x) = \lim_{j \to \infty} \int_{\partial \Omega_j} \Bigl(\frac{\partial u_{q, \Omega}}{\partial \nu_j}\Bigr)^2\, d\Haus^{d-1}(x)\, .
\end{equation}

We claim that
\begin{equation}\label{eq: convergence of normal derivatives}
  \lim_{j\to \infty}\biggl\|\frac{\partial (u_{q, \Omega}-u_{q, \Omega_j})}{\partial\nu_j}\biggr\|_{L^2(\partial\Omega_j)} = 0\, .
\end{equation}
Before proving this statement, let us show how it implies Theorem~\ref{thm: semi-linear eigenvalue}.

By~\eqref{eq: normal derivative on inner boundary}, the Cauchy--Schwarz inequality,~\eqref{eq: convergence of normal derivatives}, and Theorem~\ref{thm: semi-linear eigenvalue} applied for the smooth sets~$\Omega_j$, 
\begin{align*}
  \int_{\partial\Omega} \Bigl(\frac{\partial u_{q, \Omega}}{\partial\nu}\Bigr)^2 \, d\Haus^{d-1}(x)
  &=
  \lim_{j\to \infty}\int_{\partial \Omega_j} \Bigl(\frac{\partial u_{q, \Omega}}{\partial \nu_j}\Bigr)^2\, d\Haus^{d-1}(x)\\
  &=
  \lim_{j\to \infty}\int_{\partial \Omega_j} \Bigl(\frac{\partial u_{q, \Omega_j}}{\partial \nu_j}\Bigr)^2\, d\Haus^{d-1}(x)\\
  &\geq
  \limsup_{j\to \infty} \frac{\lambda_q(\Omega_j)^{1+\alpha_q}}{\alpha_q \lambda_q(B)^{\alpha_q}}\\
  & = \frac{\lambda_q(\Omega)^{1+\alpha_q}}{\alpha_q \lambda_q(B)^{\alpha_q}}\, .
\end{align*}
This is the inequality claimed in Theorem~\ref{thm: semi-linear eigenvalue} for $\Omega$. Therefore, all that remains to complete the proof of the theorem is to verify~\eqref{eq: convergence of normal derivatives}. 

As in Section \ref{sec: Prelminaries}, let $\Gamma$ be the Newtonian potential in $\R^d$. We set
\begin{equation*}
  w_j := -\Gamma*\Bigl[\lambda_q(\Omega)u_{q, \Omega}^{q-1}-\lambda_q(\Omega_j)u_{q, \Omega_j}^{q-1}\Bigr]\, , 
\end{equation*}
and let $v_j$ be defined by
\begin{equation*}
  u_{q, \Omega}-u_{q, \Omega_j} = w_j + v_j\, .
\end{equation*}
We will show that the analogue of~\eqref{eq: convergence of normal derivatives} holds separately for $w_j$ and for $v_j$.

We begin with $w_j$ and use the fact that $\{u_{q, \Omega_j}\}_{j\geq 1}$ is bounded in $L^\infty(\Omega)$. This follows from inequality \eqref{eq:linfty} together with domain monotonicity of $\lambda_q$, using $\Omega_j \subset \Omega$. Consequently, $w_j$ are uniformly bounded in $C^{1, \alpha}_{\mathrm{loc}}(\R^d)$, for any $\alpha<1$ (see, for instance,~\cite[Theorem~10.2]{LiebLoss}). By the Arzel\`a--Ascoli theorem and passing to a subsequence, we can assume that $w_j$ converges in $C^{1, \alpha}_{\mathrm{loc}}(\R^d)$, for all $\alpha<1$, to some limit $w$. Since $\lambda_q(\Omega_j) \to \lambda_q(\Omega)$ and $u_{q, \Omega_j} \to u_{q, \Omega}$ in $H_0^1(\Omega)$, an application of Young's inequality (as in the proof of~\cite[Theorem 10.2]{LiebLoss}) implies that $w_j\to 0$ in $L^1(\Omega)$, consequently $w=0$. By convergence in $C^{1, \alpha}_{\mathrm{loc}}(\R^d)$ and properties~\ref{it: Lip set approx Jacobian}-\ref{it: Lip set approx normal convergence}, we deduce that
\begin{equation*}
  \lim_{j\to \infty} \Bigl\|\frac{\partial w_j}{\partial \nu_j}\Bigr\|_{L^2(\partial\Omega_j)} =0\, .
\end{equation*}
This is the analogue of~\eqref{eq: convergence of normal derivatives} for $w_j$.

We now turn to $v_j$. By construction, they solve
\begin{equation*}
  \begin{cases}
    -\Delta v_j =0 \quad & \mbox{in }\Omega_j\, , \\
    v_j =  u_{q, \Omega}- w_j & \mbox{on }\partial\Omega_j\, .
  \end{cases}
\end{equation*}
Write $v_j := v_{j, 1}-v_{j, 2}$ with
\begin{align*}
  \begin{cases}
    -\Delta v_{j, 1} =0 \quad & \mbox{in }\Omega_j\, , \\
    v_{j, 1} =  u_{q, \Omega} & \mbox{on }\partial\Omega_j\, , 
  \end{cases}\qquad \mbox{resp.}\qquad 
  \begin{cases}
    -\Delta v_{j, 2} =0 \quad & \mbox{in }\Omega_j\, , \\
    v_{j, 2} =  w_j & \mbox{on }\partial\Omega_j\, .
  \end{cases}
\end{align*}
We will now use the results in~\cite{verchota_layer_1984} to argue that $\|\frac{\partial v_{j, i}}{\partial\nu_j}\|_{L^2(\partial\Omega_j)}\to 0$ for $i=1, 2$. This gives the analogue of~\eqref{eq: convergence of normal derivatives} for $v_j$ and therefore completes the proof.

By arguing as in the proof of~\cite[Theorem 2.1]{verchota_layer_1984} (see also~\cite[Theorem 2]{JerisonKenig_BAMS81}) we can bound
\begin{equation*}
  \Bigl\|\frac{\partial v_{j, i}}{\partial \nu_j}\Bigl\|_{L^2(\partial \Omega_j)}\leq C \|\nabla_t v_{j, i}\|_{L^2(\partial \Omega_j)}
\end{equation*}
with a constant $C$ that is independent of $j$. Here, $\nabla_t$ denotes the tangential derivative on $\partial\Omega_j$. Since $\partial\Omega_j$ is smooth and $u_{q, \Omega}, w_j \in C^{1, \alpha}(\partial\Omega_j)$, Schauder theory implies that the boundary data are achieved in the classical sense, so $\nabla_t v_{j, 1}= \nabla_t u_{q, \Omega}$ and $\nabla_t v_{j, 2} = \nabla_t w_j$ on $\partial\Omega_j$. Since $w_j$ converges to zero in $C^{1, \alpha}_{loc}(\R^d)$, we have $\|\nabla_t w_j\|_{L^2(\partial\Omega_j)}\to 0$ and, consequently, $\|\frac{\partial v_{j, 2}}{\partial \nu_j}\|_{L^2(\partial\Omega_j)}\to 0$.

The fact that $\|\nabla_t u_{q, \Omega}\|_{L^2(\partial\Omega_j)}\to 0$ follows from~\ref{it: Lip set approx normal convergence}, the fact that $\nabla u_{q, \Omega}$ has a non-tangential limit which is normal to the boundary almost everywhere on $\partial\Omega$, the fact that the non-tangential maximal function of $\nabla u_{q, \Omega}$ belongs to $L^2(\partial\Omega)$, and the dominated convergence theorem. Consequently, $\|\frac{\partial v_{j, 1}}{\partial \nu_j}\|_{L^2(\partial\Omega_j)}\to 0$. This concludes the proof.

\appendix

\section{On equality in Theorem~\ref{thm: BM lambdaq}}
\label{Appendix}

In this appendix we characterize the equality cases in the Brunn--Minkowski-type inequality of Theorem~\ref{thm: BM lambdaq} in the case $1\leq q<2$. That is, we will extend Colesanti's result~\cite{colesanti_brunnminkowski_2005} for bounded open sets with $C^2$ boundary to arbitrary open sets.

Let $\Omega_0, \Omega_1$ be non-empty and open. For $t\in (0, 1)$ set
$$
\Omega_t := (1-t)\Omega_0+t\Omega_1 \, .
$$

As shown at the beginning of Section \ref{sec: BM inequality}, if $\min\{\lambda_q(\Omega_0), \lambda_q(\Omega_1)\}=0$, then for all $t\in (0, 1)$ one has $\lambda_q(\Omega_t)=0$ and, consequently, equality holds in the inequality. Thus, the only case where one can hope to characterize $\Omega_0, \Omega_1$ yielding equality is when neither $\lambda_q(\Omega_0)$ nor $\lambda_q(\Omega_1)$ is zero. If $\min\{\lambda_q(\Omega_0), \lambda_q(\Omega_1)\}>0$ but $\lambda_q(\Omega_t)=0$, then clearly equality does not hold in the inequality. To characterize equality cases we can thus, without loss of generality, assume that $\lambda_q(\Omega_i)>0$ for $i=0, 1, t$. 

\begin{lemma}\label{eq:equalitybdd}
	Fix $q\in [1, 2)$ and $t\in (0, 1)$. Let $\Omega_0, \Omega_1\subset \R^d$ be non-empty open sets. If $\lambda_q(\Omega_t)>0$, then the sets $\Omega_0$, $\Omega_1$, and $\Omega_t$ are bounded.
\end{lemma}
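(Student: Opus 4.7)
The plan is to argue by contrapositive: assuming one of $\Omega_0$, $\Omega_1$ is unbounded, I will show $\lambda_q(\Omega_t)=0$, contradicting the hypothesis. This reduction suffices because once $\Omega_0$ and $\Omega_1$ are both bounded, their Minkowski combination $\Omega_t = (1-t)\Omega_0+t\Omega_1$ is automatically bounded.

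By symmetry, suppose $\Omega_0$ is unbounded. Since $\Omega_1$ is non-empty and open, it contains some ball $B_r(y_0)$, and hence
\[
	\Omega_t \;\supset\; (1-t)\Omega_0+tB_r(y_0)\, ,
\]
so that for every $x\in\Omega_0$ the translated ball $B_{tr}((1-t)x+ty_0)$ is contained in $\Omega_t$. Using that $\Omega_0$ is unbounded and $t\in(0, 1)$, I would inductively extract a sequence $\{x_j\}_{j\geq 1}\subset \Omega_0$ with $|x_j-x_k|>2tr/(1-t)$ for $j\neq k$ (at each step, choose $x_j\in\Omega_0$ of sufficiently large norm), which ensures that the balls $B_{tr}((1-t)x_j+ty_0)$ are pairwise disjoint.

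The final step is to invoke the disjoint-union identity~\eqref{eq: spin formula0}, which is available because $1\leq q<2$. Letting $F_N$ be the union of the first $N$ disjoint balls, monotonicity under inclusion, the disjoint-union formula, the translation invariance and the scaling of $\lambda_q$ give
\[
	\lambda_q(\Omega_t)\;\leq\; \lambda_q(F_N)\;=\; N^{-(2-q)/q}\, \lambda_q(B_{tr})\;=\; N^{-(2-q)/q}\, (tr)^{-1/\alpha_q}\, \lambda_q(B)\, .
\]
Sending $N\to\infty$, this forces $\lambda_q(\Omega_t)=0$, as desired.

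The argument is short and I do not anticipate a serious obstacle; the main conceptual point is the observation that for $1\leq q<2$ the disjoint-union formula makes $\lambda_q$ decay like $N^{-(2-q)/q}$ in the number of identical components, so that packing infinitely many disjoint translates of a fixed ball into $\Omega_t$ is enough to drive $\lambda_q(\Omega_t)$ to zero. Note that this mechanism is specific to $q<2$: for $q=2$ one only has $\lambda_2$ of a disjoint union equal to the minimum over components, which is why the corresponding statement in the linear case has different character.
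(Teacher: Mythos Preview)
Your argument is correct and follows essentially the same route as the paper: reduce to the case where (say) $\Omega_0$ is unbounded, pack infinitely many disjoint translated balls into $\Omega_t$ via a ball in $\Omega_1$, and use the disjoint-union formula~\eqref{eq: spin formula0} for $1\leq q<2$ together with monotonicity to conclude $\lambda_q(\Omega_t)=0$. Your version makes the $N^{-(2-q)/q}$ decay explicit and adds a nice observation about why the mechanism is specific to $q<2$, but the overall strategy matches the paper's.
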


\begin{proof}
	If $\Omega_0$ and $\Omega_1$ are bounded, then so is $\Omega_t$. Therefore it suffices to show that, if $\Omega_0$ or $\Omega_1$ is unbounded, then $\lambda_q(\Omega_t)=0$. We argue by contradiction. Assume for definiteness that $\Omega_0$ is unbounded.

	Since $\Omega_0$ is unbounded, we can find a sequence $\{x_n\}_{n\geq 1} \subset \Omega_0$ such that $|x_n|+t/(1-t) \leq |x_{n+1}|$ for each $n\geq 1$. Indeed, the sequence can be constructed by induction: pick an arbitrary point in $\Omega_0$ as $x_1$. Given $\{x_n\}_{n=1}^N$ the set $\Omega_0 \setminus B_{|x_N|+t/(1-t)}(0)$ is non-empty, since otherwise $\Omega_0$ would be bounded, and $x_{N+1}$ can be chosen arbitrarily in this set. 
	
	Fix $y\in\Omega_1$. Since $\Omega_1$ is open, there is an $\eps \in (0, 1]$ such that $B_\eps(y)\subset \Omega_1$. The set $\Omega_t$ contains $\cup_{n\geq 1} B_{t\eps}((1-t)x_n+ty)$. By construction, $|((1-t)x_n+ty)-((1-t)x_m+ty)|\geq t\geq t\eps$ for each $n\neq m$, and thus the balls in the union are disjoint. Using the monotonicity of $\lambda_q$ under set inclusions and~\eqref{eq: spin formula0}, we conclude that $\lambda_q(\Omega_t)=0$.
 \end{proof}

With the above facts in hand we are ready to prove the following result, which generalizes~\cite[Theorem 20]{colesanti_brunnminkowski_2005}.

\begin{lemma}\label{lem: pointwise ineq}
	Fix $q\in [1, 2)$ and $t\in (0, 1)$. Let $\Omega_0, \Omega_1\subset \R^d$ be non-empty open sets. If $\lambda_q(\Omega_i)>0$ for $i=0, 1, t$, then, for all $(x, y)\in \Omega_0\times \Omega_1$, 
	\begin{equation*}
		\lambda_q(\Omega_t)^{-1/2}u_{q, \Omega_t}((1-t)x+ty)^{\frac{2-q}{2}} \geq (1-t)\lambda_q(\Omega_0)^{-1/2}u_{q, \Omega_0}(x)^{\frac{2-q}{2}}+t \lambda_q(\Omega_1)^{-1/2}u_{q, \Omega_1}(y)^{\frac{2-q}{2}} \, .
	\end{equation*}
\end{lemma}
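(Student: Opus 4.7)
\emph{Plan.} The strategy is to approximate $\Omega_0$ and $\Omega_1$ from the inside by bounded open sets with smooth boundary, invoke Colesanti's pointwise inequality~\cite[Theorem~20]{colesanti_brunnminkowski_2005} for those smooth approximations, and pass to the limit. By Lemma~\ref{eq:equalitybdd}, the hypothesis $\lambda_q(\Omega_t)>0$ forces $\Omega_0$, $\Omega_1$ and $\Omega_t$ all to be bounded. As in Section~\ref{sec: BM inequality}, choose nested smooth exhaustions $\{\Omega_0^j\}, \{\Omega_1^j\}$ satisfying the hypotheses of Lemma~\ref{lem: Hausdorff interior continuity}, and set $\Omega_t^j:=(1-t)\Omega_0^j+t\Omega_1^j$. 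Any point of $\Omega_t$ has the form $(1-t)x'+ty'$ with $x'\in\Omega_0$ and $y'\in\Omega_1$, and hence lies in $\Omega_t^j$ for $j$ large enough; combined with $\Omega_t^j\subset\Omega_t^{j+1}\subset\Omega_t$ this shows that $\{\Omega_t^j\}$ is itself a monotone exhaustion of the bounded set $\Omega_t$ satisfying the hypotheses of Lemma~\ref{lem: Hausdorff interior continuity}. That lemma therefore yields
\begin{equation*}
	\lambda_q(\Omega_0^j)\to\lambda_q(\Omega_0),\qquad \lambda_q(\Omega_1^j)\to\lambda_q(\Omega_1),\qquad \lambda_q(\Omega_t^j)\to\lambda_q(\Omega_t).
\end{equation*}

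Now fix $x\in\Omega_0$ and $y\in\Omega_1$. For $j$ large, $x\in\Omega_0^j$, $y\in\Omega_1^j$, and $(1-t)x+ty\in\Omega_t^j$, so Colesanti's pointwise inequality applied to the smooth pair $(\Omega_0^j,\Omega_1^j)$ gives
\begin{equation*}
	\lambda_q(\Omega_t^j)^{-1/2}\,u_{q,\Omega_t^j}((1-t)x+ty)^{\frac{2-q}{2}}\geq (1-t)\lambda_q(\Omega_0^j)^{-1/2}\,u_{q,\Omega_0^j}(x)^{\frac{2-q}{2}}+t\,\lambda_q(\Omega_1^j)^{-1/2}\,u_{q,\Omega_1^j}(y)^{\frac{2-q}{2}}.
\end{equation*}
(Colesanti formulates his result in terms of $\tilde u=\lambda_q^{1/(q-2)}u_{q,\Omega}$; the relation $\tilde u^{(2-q)/2}=\lambda_q^{-1/2}u_{q,\Omega}^{(2-q)/2}$ converts his inequality into the form above.) The proof will be complete once we verify the pointwise convergences
\begin{equation*}
	u_{q,\Omega_0^j}(x)\to u_{q,\Omega_0}(x),\quad u_{q,\Omega_1^j}(y)\to u_{q,\Omega_1}(y),\quad u_{q,\Omega_t^j}((1-t)x+ty)\to u_{q,\Omega_t}((1-t)x+ty).
\end{equation*}

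For each of the three pairs we argue the same way. Extend $u_{q,\Omega_i^j}$ by zero; then by Lemma~\ref{lem: strong eigenfunction conv} applied inside the ambient bounded set $\Omega_i$ (respectively $\Omega_t$) we obtain strong $H_0^1$ convergence to $u_{q,\Omega_i}$ (respectively $u_{q,\Omega_t}$). Combined with the uniform $L^\infty$ bound \eqref{eq:linfty}---which applies because $\{\lambda_q(\Omega_i^j)\}_j$ is bounded---interior elliptic regularity applied to $-\Delta u_{q,\Omega_i^j}=\lambda_q(\Omega_i^j)\,u_{q,\Omega_i^j}^{q-1}$ (cf.~Lemma~\ref{ellipticreg}) yields uniform $C^{1,\alpha}_{\mathrm{loc}}$ bounds on any fixed compact subset of the open set. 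Since any one of the three evaluation points is interior---for $(1-t)x+ty$ one uses that a small ball around $x$ lies in $\Omega_0^j$ and a small ball around $y$ in $\Omega_1^j$ for all large $j$, so a uniform ball around $(1-t)x+ty$ lies in $\Omega_t^j$---an Arzelà–Ascoli argument, combined with uniqueness of the $H_0^1$-limit, gives local uniform convergence. Passing to the limit in the displayed inequality yields the claim.

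\emph{Main obstacle.} The one nontrivial point is the interior regularity for the Minkowski sums $\Omega_t^j$: their boundary need not be smooth even though $\partial\Omega_0^j$ and $\partial\Omega_1^j$ are. This is not a problem here because we only need convergence at one interior point, and interior Schauder estimates depend only on interior $L^\infty$ and $\lambda_q$ bounds, not on boundary regularity.
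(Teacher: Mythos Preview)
Your proof is correct and follows the same overall strategy as the paper: reduce to bounded sets via Lemma~\ref{eq:equalitybdd}, exhaust $\Omega_0,\Omega_1$ from within by smooth sets, apply Colesanti's pointwise inequality to the approximations, and pass to the limit using $\lambda_q(\Omega_i^j)\to\lambda_q(\Omega_i)$ and $u_{q,\Omega_i^j}\to u_{q,\Omega_i}$ in $H^1_0$.

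The only difference is in how the pointwise convergence of the minimizers is justified. The paper extracts from the $H^1_0$ convergence a subsequence converging almost everywhere, restricts to a countable dense set of $x$-points, runs a diagonal argument to get simultaneous a.e.\ convergence in $y$, and then uses the continuity of $u_{q,\Omega_i}$ (Lemma~\ref{ellipticreg}) to upgrade from a.e.\ to everywhere. You instead use the uniform $L^\infty$ bound~\eqref{eq:linfty} together with interior elliptic estimates to obtain uniform $C^{1,\alpha}$ bounds on fixed interior balls, and conclude local uniform convergence (of the full sequence, by uniqueness of the $H^1_0$ limit) via Arzel\`a--Ascoli. Your route is a bit more direct---it avoids the subsequence/diagonal/density bookkeeping and gives convergence at \emph{every} interior point at once---at the cost of invoking interior regularity uniformly along the sequence rather than only for the limit. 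Your observation that the possible roughness of $\partial\Omega_t^j$ is irrelevant for interior estimates is exactly right.
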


\begin{proof}[Proof of Lemma~\ref{lem: pointwise ineq}]
	If $\Omega_0, \Omega_1$ are bounded sets with $C^2$ boundary (or convex), then the claimed inequality is shown in the proof of~\cite[Theorem 20]{colesanti_brunnminkowski_2005}; see also~\cite[Remark~22]{colesanti_brunnminkowski_2005}.

	If $\Omega_0, \Omega_1$ are as the lemma, then, by the previous lemma, $\Omega_0, \Omega_1$ are bounded. Consequently, there are interior exhaustions $\{\Omega_i^j\}_{j\geq 1}$ for $i=0, 1$ such that $\Omega_i^j$ is bounded and has $C^2$-regular boundary, $\Omega_i^j\subset \Omega_i^{j+1}\subset \Omega_i$ for all $j\geq 1$, $\cup_{j\geq 1}\Omega_i^j = \Omega_i$ and $\Omega_i^j \to \Omega_i$ with respect to the Hausdorff distance. By Lemma~\ref{lem: Hausdorff interior continuity}, $\lambda_q(\Omega_i^j)\to \lambda_q(\Omega_i)$. Therefore, Lemma~\ref{lem: strong eigenfunction conv} implies that $u_{q, \Omega_i^j} \to u_{q, \Omega_i}$ in $H_0^1(\Omega_i)$.

	For each $j\geq 1$, define $\Omega_t^j := (1-t)\Omega_0^j + t \Omega_1^j$. Using the properties of $\Omega_i^j$ and the fact that $\Omega_0$ and $\Omega_1$ are bounded, we easily see that $\Omega_t^j$ satisfies the assumptions of Lemma~\ref{lem: Hausdorff interior continuity}. Hence Lemmas~\ref{lem: Hausdorff interior continuity} and~\ref{lem: strong eigenfunction conv} imply that $\lambda_q(\Omega_t^j)\to \lambda_q(\Omega_t)$ and $u_{q, \Omega_t^j}\to u_{q, \Omega_t}$ in $H^1_0(\Omega_t)$.

	By passing to a subsequence in $j$, we may assume that $u_{q, \Omega_0^j}(x)\to u_{q, \Omega_0}(x)$ for almost every $x\in \Omega_0$. Let $\{x_n\}_{n \geq 1}$ be countable set of such points that is dense in $\Omega_0$. For fixed $x \in \Omega_0$ we can pass to a further subsequence so that $u_{q, \Omega_t^j}((1-t)x+ty) \to u_{q, \Omega_t}((1-t)x+ty)$ and $u_{q, \Omega_1^j}(y) \to u_{q, \Omega_1}(y)$ for almost every $y \in \Omega_1$. By a diagonal argument, we can pass to a subsequence in $j$ so that, for each $x_n$ and almost every $y \in \Omega_1$
	\begin{equation*}
		u_{q, \Omega_t^j}((1-t)x_n +t y) \to u_{q, \Omega_t}((1-t)x_n +t y)\quad \mbox{and} \quad u_{q, \Omega_1^j}(y) \to u_{q, \Omega_1}(y) \, .
	\end{equation*}
	Since the intersection of countably many sets of full measure is again a set of full measure, the convergence above holds almost everywhere in $\Omega_1$ simultaneously for all $n$. 

By applying the pointwise inequality of Colesanti to the sets in the exhaustion and by passing to the limit in $j$, we conclude that, for each $x_n$ and almost all $y \in \Omega_1$, 
\begin{equation*}
	 \lambda_q(\Omega_t)^{-1/2}u_{q, \Omega_t}((1-t)x_n+ty)^{\frac{2-q}{2}}\geq (1-t)\lambda_q(\Omega_0)^{-1/2}u_{q, \Omega_0}(x_n)^{\frac{2-q}{2}}+t\lambda_q(\Omega_1)^{-1/2}u_{q, \Omega_q}(y)^{\frac{2-q}{2}}\, .
\end{equation*}
By elliptic regularity (Lemma \ref{ellipticreg}), both sides of this inequality are continuous functions of $y\in \Omega_1$, and hence it extends to all $y \in \Omega_1$. Similarly, for fixed $y\in \Omega_1$ the continuity of both sides of the inequality as a function of $x_n$ implies that it extends also to all $x\in \Omega_0$. This completes the proof of Lemma~\ref{lem: pointwise ineq}.  
\end{proof}

\begin{proof}[Proof of Theorem~\ref{thm: BM lambdaq}. Equality cases for $1\leq q<2$]
	Let $\Omega_0$ and $\Omega_1$ be non-empty open sets with $\lambda_q(\Omega_i)>0$ for $i=0, 1$ and such that equality holds in \eqref{eq: BM ineq thm}. We continue to use the notation $\Omega_t=(1-t)\Omega_0+t\Omega_1$ and recall from the discussion at the beginning of this appendix that $\lambda_q(\Omega_t)>0$. Thus, by Lemma \ref{eq:equalitybdd}, $\Omega_i$, $i=0, 1, t$, are all bounded.
	
	By a simple rescaling argument as in the proof of~\cite[Theorem~11]{colesanti_brunnminkowski_2005}, we may assume that equality holds in the multiplicative inequality 
	\begin{equation*}
		\lambda_q(\Omega_t)^{\frac{q}{q-2}}\geq \lambda_q(\Omega_0)^{(1-t)\frac{q}{q-2}}\lambda_q(\Omega_1)^{t\frac{q}{q-2}}.
	\end{equation*}
	We set
	$$
	f:=\lambda_q(\Omega_0)^{\frac{q}{q-2}}u_{q, \Omega_0}^q \, , 
	\qquad
	g:=\lambda_q(\Omega_1)^{\frac{q}{q-2}}u_{q, \Omega_1}^q \, , 
	\qquad
	h=\lambda_q(\Omega_t)^{\frac{q}{q-2}}u_{q, \Omega_t}^q \, .
	$$
	By Lemma~\ref{lem: pointwise ineq}, 
	\begin{equation*}
		h((1-t)x+ty)\geq \Bigl[(1-t)f(x)^r+tg(y)^r\Bigr]^{1/r} \quad \mbox{for all }(x, y)\in \Omega_0\times \Omega_1 \mbox{ and } r := \frac{2-q}{2q}\, .
	\end{equation*}
	Thus, by the arithmetic-geometric means inequality, 
	\begin{equation*}
		h((1-t)x+ty)\geq f(x)^{1-t}g(y)^t \quad \mbox{for all }(x, y)\in \Omega_0\times \Omega_1\, .
	\end{equation*}
	Recall that we extend the function $u_{q, \Omega_i}$, $i=0, 1, t$, by zero to $\R^d\setminus\Omega_i$. Consequently, $f, g, h$ are functions on $\R^d$ and we have
	\begin{equation*}
		h((1-t)x+ty)\geq f(x)^{1-t} g(y)^t \quad \mbox{for all }(x, y)\in \R^d\times \R^d\, .
	\end{equation*}

	We now use the Pr\'ekopa--Leindler inequality and the characterization of its cases of equality; see \cite[Theorem 12]{Dubuc} and also \cite[Theorem 21]{colesanti_brunnminkowski_2005}. By this inequality, we deduce that
	\begin{align*}
		\lambda_q(\Omega_t)^{\frac{2}{q-2}} =\int_{\R^d}h(x)\, dx &\geq
		\biggl(\int_{\R^d}f(x)\, dx\biggr)^{1-t}\biggl(\int_{\R^d} g(x)\, dx\biggr)^t = \lambda_q(\Omega_0)^{(1-t)\frac{2}{q-2}}\lambda_q(\Omega_1)^{t\frac{2}{q-2}}\, .
	\end{align*}
	Since, by assumption, we have equality here, we deduce from the characterization of equality in the Pr\'ekopa--Leindler inequality that there is a log-concave function $F$, as well as $\kappa, \eta>0$ and $x'\in \R^d$, so that 
	$$
	f(x) = F(x) 
	\qquad\text{and}\qquad
	g(x)=\kappa\, F(\eta x+x')
	\qquad\text{for almost every}\ x\in\R^d \, .
	$$
	
	We set $U:=\{F>0\}$ and note that, by log-concavity of $F$, $U$ is convex.
	
	We claim that
	\begin{equation}\label{eq:ff}
		\Omega_0 \subset U
		\qquad\text{and}\qquad
		|U\setminus \Omega_0 | = 0 \, .
	\end{equation}
	For the proof we will use the fact, shown in Lemma \ref{ellipticreg}, that $\Omega_0=\{f>0\}$. Thus, if $x\in U\setminus\Omega_0$, then $F(x)>0=f(x)$ and therefore such $x$ belong to the zero measure set, where $F$ and $f$ do not coincide. This proves the second assertion in \eqref{eq:ff}. To prove the first one, we argue by contradiction and assume that there is an $x_0\in\Omega_0$ with $F(x_0)=0$. By Hahn--Banach, there is an affine hyperplane passing through $x_0$ such that $U$ lies on one side of it. Thus, there is an affine halfspace $H$, containing $x_0$ on its boundary, where $F$ vanishes. Since $\Omega_0$ is open, the intersection $\Omega_0\cap H$ has positive measure and for all $x\in \Omega_0\cap H$ one has $0=F(x)<f(x)$. This is a contraction, and the proof of \eqref{eq:ff} is complete.
	
	Noting that $\{ F(\eta\, \cdot +x')>0\} = \eta^{-1}(U-x')$, we obtain, by the same argument, 
	\begin{equation}\label{eq:ff2}
		\Omega_1 \subset \eta^{-1}(U-x')
		\qquad\text{and}\qquad
		|\eta^{-1}(U-x') \setminus \Omega_1 | = 0 \, .
	\end{equation}
	Properties \eqref{eq:ff} and \eqref{eq:ff2} imply that, up to sets of measure zero, $\Omega_0$ and $\Omega_1$ are homothetic copies of the set $U$.

	Since $\Omega_0$ and $\Omega_1$ are open and since the Minkowski sum is not affected by sets of measure zero being removed from $U$, we deduce that $\Omega_t = (1-t)\Omega_0+t\Omega_1= (1-t)U + t\eta^{-1}(U-x')$. In particular, the Brunn--Minkowski-type inequality applied with $\Omega_0=U$ and $\Omega_1= \eta^{-1}(U-x')$ and the monotonicity of $\lambda_q$ under inclusions implies that
	\begin{equation*}
		\lambda_q(\Omega_t)^{-\alpha_q} \geq (1-t)\lambda_q(U)^{-\alpha_q}+t\lambda_q(\eta^{-1}(U-x'))^{-\alpha_q}\geq (1-t)\lambda_q(\Omega_0)^{-\alpha_q}+t\lambda_q(\Omega_1)^{-\alpha_q}\, .
	\end{equation*}
	Since the left-hand side is equal to the right-hand side by assumption, it must hold that $\lambda_q(\Omega_0)=\lambda_q(U)$ and $\lambda_q(\Omega_1)= \lambda_q(\eta^{-1}(U-x'))$. In particular, $u_{q, \Omega_0}$ is a minimizer for $\lambda_q(U)$, so by uniqueness of minimizers and continuity (Lemma~\ref{ellipticreg}) $u_{q, \Omega_0}=u_{q, U}$ in~$U$. If $U\setminus \Omega_0$ had positive capacity, then $u_{q, \Omega_0}$ would have to vanish on this set. But, by Lemma~\ref{ellipticreg}, $u_{q, U}$ is nowhere vanishing in $U$, and so it follows that $\Omega_0$ agrees with $U$ up to a set of capacity zero. The same argument implies that the analogue statement for $\Omega_1$.
\end{proof}

In the first inclusion in \eqref{eq:ff2} one cannot expect equality in general. For instance, if $\Omega_0$ is an open ball in $\R^d$, $d\geq 2$, with a point removed, then $u_{q, \Omega_0}$ extends continuously to this point and assumes there a positive value, while we have agreed to extend $u_{q, \Omega_0}$ by zero to the complement of $\Omega_0$.


\end{document}